\theoremstyle{plain}
\newtheorem{Theorem}{Theorem}
\newtheorem{Proposition}[Theorem]{Proposition}
\newtheorem{Lemma}[Theorem]{Lemma}
\newtheorem{Corollary}[Theorem]{Corollary}
\newtheorem{Conjecture}[Theorem]{Conjecture}
\theoremstyle{definition}
\newtheorem{Definition}[Theorem]{Definition}
\theoremstyle{remark}
\newtheorem{Remark}[Theorem]{Remark}
\newtheorem{Example}[Theorem]{Example}
\title{Primeness property for central polynomials of regular gradings}
\title{Primeness property for regular gradings}
\author[L. Centrone]{Lucio Centrone}
\address{Dipartimento di Matematica, Universit\`a degli Studi di Bari Aldo Moro, Via Edoardo Orabona, 4, 70125 Bari, Italy}
\email{lucio.centrone@uniba.it}
\author[C. Fideles]{Claudemir Fideles}
\thanks{C. Fidelis was partially supported by CNPq grant No.~305651/2021-8 and FAPESP grant No.~2023/04011-9}
\address{Department of Mathematics, UNICAMP, 13083-859 Campinas, SP,  Brazil}
\email{fideles@unicamp.br}
\thanks{L. Centrone was partially supported by PNRR-MUR PE0000023-NQSTI}
\author[P. Koshlukov]{Plamen Koshlukov}
\address{IMECC, UNICAMP, Rua S\'ergio Buarque de Holanda 651, 13083-859 Campinas, SP, Brazil}
\email{plamen@unicamp.br}
\thanks{P. Koshlukov was partially supported by FAPESP Grant 2024/14914-9, and CNPq Grant 307184/2023-4}
\author[K. Pereira]{Kau\^e Pereira}
\address{IMECC, UNICAMP, Rua S\'ergio Buarque de Holanda 651, 13083-859 Campinas, SP, Brazil}
\email{k200608@dac.unicamp.br}
\thanks{K. Pereira was supported by FAPESP Grant 2023/01673-0, and FAPESP Grant 2025/03763-2}
\subjclass[2020]{16R10, 16R50, 16W55, 16T05}
\keywords{Regular decomposition; regular gradings; graded algebra; polynomial identities}
\begin{document}
\begin{abstract}
Let $K$ be an algebraically closed field of characteristic 0 and let $G$ be a finite abelian group. If $A$ is a $G$-graded $K$-algebra, the notion of a graded polynomial identity for $A$ is defined in a natural way. A fundamental notion in PI theory is that of a central polynomial: a polynomial in the free associative algebra that, when evaluated on $A$, assumes only central values. A graded central polynomial for $A$ is defined in the same way. Suppose $A=\oplus_{g\in G} A_g$ is the decomposition of $A$ as a direct sum of its homogeneous (in the $G$-grading) components. Suppose that for every $n$ and for every $n$-tuple $(g_1,\ldots, g_n)$ of elements of $G$ there exist $a_i\in A_{g_i}$ such that $a_1\cdots a_n\ne 0$. Suppose further that for every $g$, $h\in G$ there exists a scalar $\beta(g,h)\in K^*$ such that for every $a_g\in A_g$ and $a_h\in A_h$ it holds $a_ga_h=\beta(g,h) a_ha_g$. Then the grading is called regular. A regular grading is minimal whenever there exist no $g$, $h\in G$, $g\ne h$, such that $\beta(g,x)=\beta(h,x)$ for every $x\in G$. 

In 1979, A. Regev proved that if $f$ and $g$ are two polynomials in disjoint sets of variables such that $fg$ is a central polynomial for the matrix algebra $M_n(K)$ then both $f$ and $g$ are central polynomials. Thus we have a sort of primeness property for the central polynomials. We study a generalization of the notion of primeness for graded central polynomials. First we prove that if $A$ is $G$-graded regular algebra then it does not satisfy the primeness property for graded central polynomials. As a consequence, the matrix algebras $M_n(K)$, equipped with the so-called Pauli grading, do not satisfy the primeness property. All these facts allow us to obtain several positive results, concerning graded matrix algebras. It turns out that the primeness property of the graded central polynomials is related to elementary gradings on the matrix algebra. Recall a grading is elementary whenever all matrix units are homogeneous in the grading. This leads us to study the matrix algebras of small order. We prove that, for matrices of orders $2$ and $3$, there exist no nontrivial gradings satisfying the primeness property for central polynomials, and we conjecture that this result holds for $n>3$. Finally we study the primeness property for ordinary central polynomials for $\mathbb{Z}_{2}$-graded regular algebras. We use the known fact that if a $\mathbb{Z}_{2}$-grading on $A$ is regular and minimal then $A$ satisfies the graded identities of the infinite dimensional Grassmann algebra $E$, and moreover, $A$ contains a copy of $E$. We obtain that such algebras $A$ satisfy the primeness property for central polynomials. As a consequence we prove that one can drop the condition on minimality for the regularity of the grading. 
\end{abstract}
\maketitle

\section{Introduction}
Let $A$ be an associative algebra over a field $K$ of characteristic 0, and let $K\langle X\rangle$ be the free associative algebra freely generated by the infinite set $X$ over $K$. Denote by $T(A)\subseteq K\langle X\rangle$ the set of all polynomial identities for $A$. Then $T(A)$ is a T-ideal, that is, it is an ideal that is closed under all endomorphisms of $K\langle X\rangle$. The quotient $K\langle X\rangle/T(A)$ is the relatively free algebra of $A$, it satisfies the same identities as $A$. But the description of $T(A)$, or equivalently, of the relatively free algebra of $A$, is extremely difficult and has been achieved in very few cases. Around 1985, Kemer developed a structure theory of the T-ideals, see for examle \cite{kemer1985varieties}. He described the T-prime T-ideals (the ones that are "prime" with respect to the class of the T-ideals), proved that in a sense, the T-prime T-ideals are the "building blocks" of all T-ideals, and moreover solved in the affirmative the long standing Specht problem. The latter problem asked whether every T-ideal in characteristic 0 is finitely generated. The methods developed by Kemer rely on the systematic use of gradings by the group $\mathbb{Z}_2$, and on the interplay between the gradings and grade identities. Let $E$ be the infinite dimensional Grassmann algebra of a vector space $V$ with a basis $e_1$, $e_2$, \dots{} Then $E$ has a basis consisting of 1 and all monomials $e_{i_1}e_{i_2}\cdots e_{i_k}$, $i_1<i_2<\cdots <i_k$, $k\ge 1$. The multiplication in $E$ is induced by $e_ie_j=-e_je_i$ for every $i$ and $j$. The Grassmann algebra admits a natural $\mathbb{Z}_2$-grading, $E=E_0\oplus E_1$ where $E_0$ is the span of 1 and all monomials of even length (the centre of $E$), and $E_1$ is the span of all monomials of odd length (this is the anticommutative part of $E$). If $A=A_0\oplus A_1$ is a $\mathbb{Z}_2$-graded algebra its Grassmann envelope is $E(A)=E_0\otimes A_0\oplus E_1\otimes A_1$. Kemer proved that if $A$ is a PI algebra then it satisfies the same identities as the Grassmann envelope of a finitely generated $\mathbb{Z}_2$-graded algebra; moreover, "finitely generated" can be substituted by "finite dimensional". The theory developed by Kemer has a profound impact on PI theory. 

As already mentioned the relatively free algebra of $A$ is very important in studying the identities of $A$. But it is much larger than $A$, and may be difficult to study. That is why one wants to have a good model of the relatively free algebra. The most well known one is for the full matrix algebra $M_n(K)$, and it is called the generic matrix algebra. Assume $x_{ij}^k$ are independent commuting variables, $1\le i,j\le n$, $k=1$, 2, \dots{}, and form the matrices $X_k=(x_{ij}^k)$. These are matrices whose entries lie in the polynomial algebra $K[x_{ij}^k]$. The algebra $G_n$ generated by the generic matrices $X_k$ is the generic matrix algebra. It is isomorphic to the relatively free algebra for $M_n(K)$. It is well known it has no zero divisors, and its centre is a domain. Hence one may consider its field of fractions. An extremely important problem arises: determine the relationship between $K$ and the latter field. This was studied by Procesi in \cite{procesi1969}. Around 1950, Kaplansky posed the following problem: do there exist central polynomials for the matrix algebras $M_n(K)$? Recall that a central polynomial is a polynomial without a constant term in $K\langle X\rangle$ that assumes only central values when evaluated on $A$. Clearly the identities for $A$ are a particular case of central polynomials. 

Central polynomials for $M_n(K)$ were constructed independently by Formanek \cite{formanek1972central} and by Razmyslov \cite{razmyslov1973} (see also \cite{razmyslovbook}). It should be noted that a complete description of the central polynomials for $M_n(K)$ is known only when $n\le 2$. In fact, if $n>3$ not even the least degree of a central polynomial for $M_n(K)$ is known. 

When the algebra $A$ has an additional structure like a grading, or a trace, or an involution, it is natural to incorporate that structure into the identities of the algebra. In most of these cases such identities are "easier" to study; on the other hand, they provide very useful insights on the ideals of identities of algebras. 

In this paper we deal with algebras graded by (finite abelian) groups. Gradings on algebras are extremely important; they first appeared with the polynomial algebra in one or several variables; the grading on it is given by the degree. Later on the notion of a superalgebra became crucial in developing theories related to Mathematical Physics. Such structures have a $\mathbb{Z}_2$-grading plus additional restrictions. Recall that in the associative case no such additional restrictions are needed, though. The classification of the finite dimensional $\mathbb{Z}_2$-graded algebras that are simple as graded algebras (that is, they have no nontrivial homogeneous ideals) was given by C. T. C. Wall \cite{wall}. Group gradings on the matrix algebras $M_n(K)$ were completely described in \cite{bahturin2001group}. We refer the interested reader to the monograph \cite{elduque2013gradings} for more information concerning group gradings on associative and Lie algebras.

We study graded algebras $A=\oplus_{g\in G} A_g$ over an algebraically closed field of characteristic 0 assuming that the grading group $G$ is finite and abelian. A $G$-grading on $A$ is regular if for every $n$ and for every $n$-tuple of elements $g_1$, \dots, $g_n\in G$ there exist $a_i\in A_{g_i}$ such that $a_1\cdots a_n\ne 0$, and for every $g$, $h\in G$ there exists a scalar $\beta(g,h)\in K^*$ such that $a_ga_h=\beta(g,h)a_ha_g$ for every choice of $a_g\in A_g$ and $a_h\in A_h$. This notion was introduced and first studied by Regev and Seeman \cite{regevz2}, later on significant contribution to the topic was given in \cite{bcommutation, bahturin2009graded, Eli1}. See also \cite{LPK.1, LPK.2, L.P.K.3}. Observe that $\beta$ is a bicharacter on $G$. We study the graded central polynomials when the regular grading is minimal. Recall that a regular grading on $A$ is minimal whenever for every $g$, $h\in G$,  the equalities $\beta(g,x)=\beta(h,x)$ for every $x\in G$ imply $g=h$. If we form the matrix $M^A$ of order $|G|$ with entries $\beta(g,h)$, the grading is minimal if and only if $\det(M^A)\ne 0$. As shown in \cite{LPK.1} this does not necessarily hold if the characteristic of $K$ is $p>2$. 

We study the primeness property for the graded central polynomials of $A$. Recall that in \cite{regev1979primeness}, A. Regev proved that if $f$ and $g$ are polynomials in disjoint sets of variables such that $fg$ is a nonzero central polynomial for $M_n(K)$ then both $f$ and $g$ are central polynomials for $M_n(K)$. The algebras $M_n(E)$ and $M_{k,l}(E)$ also satisfy the primeness property for their central polynomials, see \cite{diniz2015primeness, DinizFidelis}. In the latter paper the graded version of the primeness property for central polynomials was studied as well. We first prove that if $\Gamma$ is a regular grading on $A$ then we can find a coarsening $\Gamma_0$ of $\Gamma$ such that the coarsening is a minimal regular grading. If one denotes by $G_0(\beta)=\{g\in G\mid \beta(g,h)=1 \text{ for each } h\in G\}$ then $G_0$ is a subgroup of $G$. We prove that the grading on $A$ induced by $G/G_0$ is regular and minimal. This allows us to deduce that $A$ satisfies the primeness property for graded central polynomials with respect to the grading $\Gamma$ if and only if it does with respect to $\Gamma_0$. We deduce that a $G$-graded regular algebra cannot satisfy the primeness property for graded central polynomials. As a consequence, the algebras $M_n(K)$ equipped with the Pauli grading, fail to satisfy the primeness property for graded central polynomials. Another consequence is a description of the situations where a grading on $M_n(K)$ satisfies the primeness property for graded central polynomials. 

We prove, in section 4 of the paper, that no nontrivial grading on $M_2(K)$ and on $M_3(K)$ satisfies the primeness property for graded central polynomials. It seems to us that the same holds for every $M_n(K)$, $n\ge 4$. 

Finally we consider $\mathbb{Z}_2$-graded regular algebras $A$ and the primeness property for their ordinary central polynomials. We use some ideas and results from \cite{L.P.K.3}, and we obtain that if the regular decomposition is minimal then $A$ satisfies the primeness property for the ordinary central polynomials. The key point here is that $A$ satisfies the same graded identities as $E$, moreover $A$ contains a copy of $E$. All this means that the infinite dimensional Grassmann algebra is the keystone in determining the $\mathbb{Z}_2$-graded regular minimal algebras.

\section{Background}

Throughout this paper, $K$ denotes an algebraically closed field of characteristic $0$ and $G$ is a finite abelian group (written additively). By a $G$-graded algebra $A$, we mean a $K$-algebra $A$ endowed with a decomposition into a direct sum of subspaces $A = \bigoplus_{g \in G} A_g$ satisfying $A_g A_h \subseteq A_{g + h}$ for all $g$, $h \in G$. In order to be precise with the grading, we will denote it by $\Gamma$, i.e.,
\[
\Gamma\colon A = \bigoplus_{g \in G} A_{g}.
\]
We shall write $(A, \Gamma)$ to denote the algebra $A$ jointly with its grading $\Gamma$. Consider two gradings $\Gamma\colon A=\bigoplus_{g\in G_{1}} A_{g}$ and $\Gamma'\colon A=\bigoplus_{h\in G_{2}} A_{h}'$ on $A$, where $G_{1}$ and $G_{2}$ are both finite groups. We say $\Gamma'$ is a \textsl{}{coarsening} of $\Gamma$ (and write $\Gamma'\leq \Gamma$) if for every $g\in G_{1}$ there exists $h\in G_{2}$ such that $A_{g}\subseteq A_{h}'$. Now, consider a $G_{1}$-graded algebra $\Gamma\colon A=\bigoplus_{g\in G_{1}} A_{g}$ and let $\alpha\colon G_{1}\rightarrow G_{2}$ be an epimorphism of groups. We can define a natural coarsening of $\Gamma$ induced by $\alpha$ as follows: for each $h\in G_{2}$, set $A_{h}':=\bigoplus_{g\in \alpha^{-1}(h)} A_{g}$. It can be quickly seen that this defines a grading on $A$, denoted by $\Gamma^{\alpha}$, and that $\Gamma^{\alpha}$ is a coarsening of $\Gamma$. This grading is called the \textsl{coarsening of $\Gamma$ realized by} $\alpha\colon G_{1}\rightarrow G_{2}$.

If $A$ is a $K$-algebra we shall denote its centre by $Z(A)$. Furthermore, if $A$ is endowed with a grading $\Gamma$, and we want to highlight its grading, we write $Z(A, \Gamma)$ instead of $Z(A)$. 

We denote $K\langle X\rangle$ the free unitary associative algebra freely generated by $X$ over $K$, where $X=\{x_{1},x_{2},\ldots,x_{n},\ldots\}$ is a countable infinite set of independent variables, and $K\langle X_{G}\rangle$ is the free unitary associative $G$-graded algebra freely generated by $X=\cup_{g\in G}X_g$ over $K$, where the $X_g$'s are infinite countable sets of disjoint variables. For a background on these basic concepts, we refer the reader to the monographs \cite{drensky2000free}, \cite{elduque2013gradings} and \cite{GZbook}.
 
\begin{Definition}
\label{Definition.1}
Let $A$ be a $K$-algebra endowed with a $G$-grading $\Gamma\colon A = \bigoplus_{g \in G} A_{g}$.

\begin{enumerate}
    \item A graded ideal $I'\trianglelefteq K\langle X_{G}\rangle$ is called $T_{G}$-\textsl{ideal} if $\phi(I')\subseteq I'$ for every graded endomorphism $\phi$ of $K\langle X_{G}\rangle$.
\item A polynomial $f=f(x_{1}^{(g_{1})},\ldots, x_{k_{1}}^{(g_{1})},x_{1}^{(g_{2})},\ldots, x_{k_{2}}^{(g_{2})},\ldots, x_{1}^{(g_{m})},\ldots, x_{k_{m}}^{(g_{m})})\in K\langle X_{G}\rangle$ is  a \textsl{graded polynomial identity} for the $G$-graded algebra $A$ if
\[
f(a_{1}^{(g_{1})},\ldots, a_{k_{1}}^{(g_{1})},a_{1}^{(g_{2})},\ldots, a_{k_{2}}^{(g_{2})},\ldots, a_{1}^{(g_{m})},\ldots, a_{k_{m}}^{(g_{m})})=0
\]    
for every $a_{1}^{(g_{i})}$, \dots, $a_{k_{i}}^{(g_{i})}\in A_{g_{i}}$, $1\leq i\leq m$. Moreover, the set of all graded identities for $A$, denoted by $T_{G}(A)$, is a $T_{G}$-ideal of $K\langle X_{G}\rangle$ called the \emph{$T_G$-ideal} of $A$.
\end{enumerate}
\end{Definition}
We recover the ordinary polynomial identities if $A$ is graded by the trivial group, and we denote by $T(A)$ the T-ideal of $A$.

\begin{Definition} 
\label{Definition.2}
Let $\Gamma\colon A=\bigoplus_{g\in G}A_{g}$ be $G$-graded. Given $f(x_{1}^{(g_{1})},\ldots, x_{n}^{(g_{n})})\in K\langle X_{G}\rangle$,  we say that $f$ is a \textsl{graded central polynomial} of $A$ if $f(a_{1},\ldots, a_{n}) \in Z(A)$ for all admissible substitution $a_{1}\in A_{g_{1}}$, \dots, $a_{n} \in A_{g_{n}}$. Furthermore, if $f(x_{1}^{(g_{1})},\ldots, x_{n}^{(g_{n})})\notin T_{G}(A)$, then, in this case, we refer to $f$ as a \textsl{proper graded central polynomial} of $A$.    
\end{Definition}

Given $\Gamma\colon A=\bigoplus_{g\in G} A_{g}$ a $G$-graded algebra,  we denote by $\mathscr{C}_{A,\Gamma}$ the vector space of all graded central polynomials and $\mathscr{X}_{A,\Gamma}$ the set of all proper graded central polynomials. Recall that $\mathscr{C}_{A,\Gamma}$ is invariant under all graded endomorphisms of $K\langle X_{G}\rangle$ (see \cite{DinizFidelis}).

\begin{Definition}
\label{primeness.definition}
Let $\Gamma\colon A=\bigoplus_{g\in G}A_{g}$ be a $G$-graded algebra. We say $A$ satisfies the \textsl{primeness property for graded central polynomials} if, given two graded polynomials in disjoint sets of variables $f(x_{1}^{(g_{1})},\ldots, x_{r}^{(g_{r})})$, $g(x_{r+1}^{(g_{r+1})},\ldots, x_{s}^{(g_{s})}) \in K\langle X_{G}\rangle$ satisfying
\[
f(x_{1}^{(g_{1})},\ldots, x_{r}^{(g_{r})})g(x_{r+1}^{(g_{r+1})},\ldots, x_{s}^{(g_{s})}) \in \mathscr{X}_{A,\Gamma},
\]
then both $f(x_{1}^{(g_{1})},\ldots, x_{r}^{(g_{r})}) \in \mathscr{X}_{A,\Gamma}$ and $g(x_{r+1}^{(g_{r+1})},\ldots, x_{s}^{(g_{s})}) \in \mathscr{X}_{A,\Gamma}$.
\end{Definition}

We recall the definition of elementary and fine gradings.

\begin{Definition}
\begin{enumerate}
    \item Let $n\in \mathbb{N}$ and let $N$ be a finite group. Consider the matrix algebra $C := M_{n}(K)$ and denote by $e_{i,j}$ the elementary matrix having $1$ in the $(i,j)$-th entry and $0$ elsewhere. Given an $n$-tuple $\overrightarrow{n}:=(g_{1},\ldots,g_{n}) \in N^{n}$, it defines an $N$-grading on $C$ by setting
\[
C_{g} := \operatorname{span}_{K} \{e_{i,j} \mid g_{j}g_{i}^{-1} = g\}
\]
for all $g \in N$.
This grading is called the \textsl{elementary grading} defined by $\overrightarrow{n}$.
\item Let $H$ be a group and $\Gamma\colon R=\bigoplus_{h\in H}R_{h}$ be an $H$-graded algebra. We say the grading $\Gamma$ is \textsl{fine} if $\dim R_{h}\leq 1$, for all $h\in G$. 
\end{enumerate}
\end{Definition}
\begin{Remark} In \cite[Theorem 5]{bahturin2001group}, Bahturin, Sehgal and Zaicev described all group gradings on the matrix algebra $M_n(K)$. In a sense, they proved that every group grading on $M_n(K)$ is a tensor product of an elementary and a fine grading (though by different groups). 
\end{Remark}

\begin{Example} Let $A$ be a PI algebra and let $T(A)$ be its $T$-ideal. We say $A$ is \emph{verbally prime} if whenever $f$ and $g$ are polynomials in disjoint sets of variables with $f$, $g\in T(A)$ and $fg\in T(A)$, then $f\in T(A)$ or $g\in T(A)$. 
Kemer showed in \cite{kemer1985varieties} that over a field of characteristic $0$, every verbally prime PI algebra is PI-equivalent to one of the following algebras $M_{n}(K)$ ($n\geq 1$), $M_{n}(E)$ (where $E$ is the infinite-dimensional Grassmann algebra), or $M_{k,l}(E)$ ($k$, $l> 0$), where $M_{k,l}(E)$ is a subalgebra of $M_{k+l}(E)$, which is defined in the following way: consider $E=E_{0}\oplus E_{1}$, the canonical $\mathbb{Z}_{2}$-grading  on $E$; an element $Y\in M_{k,l}(E) \subseteq M_{k+l}(E)$ is of the form
\[
Y = \begin{pmatrix}
    A & B \\
    C & D
\end{pmatrix},
\]
where $A \in M_{k}(E_0)$, $D \in M_{l}(E_0)$, and the blocks $B$ and $C$ have entries in $E_{1}$. 

The situation changes dramatically if $K$ is an infinite field of characteristic $p>0$. While the above T-ideals remain verbally prime, Razmyslov constructed various new T-ideals that are verbally prime, see \cite[Theorem 33.3, p.~157]{razmyslovbook}. A complete description of the verbally prime algebras in positive characteristic is not known. 

We recall several important results concerning central polynomials and the primeness property.

In \cite{regev1979primeness} Regev proved that $M_{n}(K)$ ($n\geq 1$) has the primeness property for central polynomials.

In \cite{diniz2015primeness}, Diniz obtained that $M_{n}(E)$ and $M_{n,n}(E)$ have the primeness property for central polynomials. Finally, in \cite{DinizFidelis} Diniz and Fidelis completed the study of the primeness property for verbally prime algebras by proving that $M_{k,l}(E)$ (for any $k$, $l>0$) also has the primeness property for central polynomials.

Let $Q$ be a finite group with $n:=|Q|$. Consider an elementary grading on $M_{n}(K)$ given by the $n$-tuple $(q_{1},\ldots,q_{n})$. This grading is called  \textsl{crossed product grading}
        if $q_{i}\neq q_{j}$ for any $i\neq j$. Suppose $Q$ has no non-trivial representation $\rho\colon Q\rightarrow K^{\ast}$.  Then, endowed with this grading $M_{n}(K)$ has the primeness property for graded central polynomials \cite[Theorem 3.10]{DinizFidelis}.
\end{Example}

\begin{Definition}
Let $\Gamma\colon A=\bigoplus_{g\in G}A_{g}$ be a $G$-graded algebra. We say that $A$ is \emph{$G$-graded strongly verbally prime} if, for any graded polynomials $f$, $g \in K\langle X_{G}\rangle$ in disjoint sets of variables, the condition $fg \in T_{G}(A)$ implies $f \in T_{G}(A)$ or $g \in T_{G}(A)$.
\end{Definition}
\begin{Example} Any $G$-graded division algebra is $G$-graded strongly verbally prime. In \cite[Theorem 1.14]{aljadeff2018verbally}, Aljadeff and Karasik gave a classification of $G$-graded strongly verbally prime algebras in terms of division algebras.
\end{Example}
\begin{Definition}
\label{def.regular.gradings}
Let $\Gamma\colon A=\bigoplus_{g\in G}A_{g}$ be a $G$-graded algebra. Then $A$ is a \emph{$G$-graded regular} algebra (or $A$ has a \emph{regular $G$-grading}), with regular decomposition, if the following conditions hold:
\begin{enumerate}
    \item[(i)] given $n\in \mathbb{N}$ and an $n$-tuple $(g_{1},\ldots, g_{n})\in G^{n}$, there exist homogeneous elements $a_{1}\in A_{g_{1}}$, \dots, $a_{n}\in A_{g_{n}}$ such that $a_{1}\cdots a_{n}\neq 0$;
    
     \item[(ii)] for every $g$, $h\in G$ and for every $a_{g}\in A_{g}$, $a_{h}\in A_{h}$, there exists $\beta(g,h)\in K^{\ast}$ satisfying
     \[
     a_{g}a_{h}=\beta(g,h)a_{h}a_{g}.
     \]
Here $\beta(g,h)$ depends only on $g$ and on $h$ but not on the choice of  $a_{g}\in A_{g}$, $a_{h}\in A_{h}$.
\end{enumerate}
Additionally, we define the \emph{regular decomposition matrix} associated to the regular decomposition of the $G$-graded algebra $A$ as $M^{A}=(\beta(g,h))_{g,h}$.
\end{Definition}

If $\Gamma\colon A=\bigoplus_{g\in G}A_{g}$ is a $G$-graded regular algebra, we say the regular decomposition of $A$ is \textsl{nonminimal} if there exist $g$, $h\in G$, $g\neq h$, such that $\beta(g,x)=\beta(h,x)$, for all $x\in G$. 

Otherwise the regular decomposition of $A$ is \textsl{minimal}. The regular decomposition of $A$ is minimal over an algebraically closed field of characteristic 0 if and only if $\det M^{A}\neq 0$, see \cite{Eli1}. The hypothesis of $K$ being of characteristic 0 cannot be removed from the last statement, because if $\operatorname{char}(K)>2$,  this result may fail as shown in \cite{LPK.1}.

The function $\beta\colon G\times G\rightarrow K^{\ast}$ is a \textsl{bicharacter} in the following sense: for every $g$, $h$ and $s\in G$ we have
\[
\beta(g,h)=\beta(h,g)^{-1},\quad \beta(g+s,h)=\beta(g,h)\beta(s,h), \text{ and }\beta(g,s+h)=\beta(g,s)\beta(g,h).
\]
Below we give a pair of examples of minimal regular decompositions.

\begin{enumerate}
    \item The Grassmann algebra $E$ with its canonical $\mathbb{Z}_{2}$-grading is a regular algebra whose regular decomposition  is minimal \cite[Example 6]{LPK.1}).
    \item Consider $M_{1,1}(E)$ and the matrices
    \[
    I=\begin{pmatrix}
        1 & 0\\
        0 &  1
    \end{pmatrix},\quad Y_{a}:=\begin{pmatrix}
        -1 & 0\\
        0 & 1
    \end{pmatrix},\quad Y_{b}:=\begin{pmatrix}
        0 & -1\\
        1 & 0
    \end{pmatrix}.
    \]
    Then, it can be seen that $M_{1,1}(E)=E_{0}I\oplus E_{0}Y_{a}\oplus E_{1}Y_{b}\oplus E_{1}Y_{a}Y_{b}$
    defines a structure of $\mathbb{Z}_{2}\times \mathbb{Z}_{2}$-graded regular algebra on $M_{1,1}(E)$ whose regular decomposition is minimal \cite[Example 2.1]{diniz2015primeness}.
\end{enumerate}
For further examples of regular gradings we refer the reader to \cite{LPK.1}.

\section{Primeness property of graded central polynomials for matrix algebras}

Given a bicharacter $\beta\colon G\times G\rightarrow K^{\ast}$, consider the following subgroup of $G$
\[
G_{0}(\beta)=\{g\in G\mid \beta(g,h)=1,\quad \text{for every}\quad h\in G\}.
\]
Let $\Gamma\colon A = \bigoplus_{g \in G} A_{g}$ be a $G$-graded regular algebra with bicharacter $\beta$. For every $g \in G$, denote by $\operatorname{Col}(g)$ the column $(\beta(g,x))_{x \in G}$ of the matrix $M^A$. 

\begin{Proposition}
\label{reduction}
Let $\Gamma$ be a regular grading on $A$. Then $A$ admits a regular grading $\Gamma_{0}$, which is a coarsening of $\Gamma$, such that the regular decomposition of $A$ with respect to $\Gamma_{0}$ is minimal, with bicharacter $\theta$ determined by $\beta$.
\end{Proposition}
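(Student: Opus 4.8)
The plan is to realize $\Gamma_0$ as the coarsening of $\Gamma$ induced by the canonical projection $\alpha\colon G\to G/G_0(\beta)$, and to show that the commutation scalars descend to a well-defined bicharacter $\theta$ on the quotient. First I would record that $G_0:=G_0(\beta)$ is genuinely a subgroup of $G$: since $\beta(0,h)=1$ and the bicharacter identities give $\beta(g+g',h)=\beta(g,h)\beta(g',h)$ and $\beta(-g,h)=\beta(g,h)^{-1}$, the set of $g$ with $\beta(g,\cdot)\equiv 1$ is closed under the group operations. Then $\alpha$ is an epimorphism and $\Gamma_0:=\Gamma^{\alpha}$, with homogeneous components $A_{\bar h}=\bigoplus_{g\in\alpha^{-1}(\bar h)}A_g$, is a coarsening of $\Gamma$ by the construction recalled in Section~2.

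The crucial step is to define $\theta\colon (G/G_0)\times(G/G_0)\to K^{\ast}$ by $\theta(\bar g,\bar h):=\beta(g,h)$ for arbitrary representatives $g$, $h$, and to verify that this does not depend on the choice of representatives. Writing $g'=g+u$ and $h'=h+v$ with $u$, $v\in G_0$, the bicharacter relations expand $\beta(g',h')$ as $\beta(g,h)\beta(g,v)\beta(u,h)\beta(u,v)$; each factor involving $u$ or $v$ equals $1$, because $u$, $v\in G_0$ forces $\beta(u,\cdot)\equiv 1$ and, through $\beta(g,v)=\beta(v,g)^{-1}$, also $\beta(\cdot,v)\equiv 1$. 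Hence $\theta$ is well defined, and it inherits the three bicharacter identities directly from $\beta$. This is precisely the bicharacter ``determined by $\beta$'' in the statement.

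Next I would check the two regularity axioms for $\Gamma_0$. Axiom~(i) is immediate: given a tuple $(\bar g_1,\dots,\bar g_n)$, pick representatives $g_i$ and apply axiom~(i) for $\Gamma$ to produce $a_i\in A_{g_i}\subseteq A_{\bar g_i}$ with $a_1\cdots a_n\neq 0$. Axiom~(ii) is the heart of the argument, and this is where the well-definedness of $\theta$ is used: a $\Gamma_0$-homogeneous element $a_{\bar g}\in A_{\bar g}$ is a finite sum $\sum_{g\in\bar g}a_g$ of $\Gamma$-homogeneous pieces, so expanding $a_{\bar g}a_{\bar h}=\sum_{g\in\bar g,\,h\in\bar h}a_g a_h=\sum_{g,h}\beta(g,h)a_h a_g$ and using that every scalar $\beta(g,h)$ occurring equals the common value $\theta(\bar g,\bar h)$ lets me factor it out and recognize $\theta(\bar g,\bar h)\,a_{\bar h}a_{\bar g}$. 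Thus $\Gamma_0$ is regular, its bicharacter is $\theta$, and its regular decomposition matrix is the collapse of $M^{A}$.

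Finally, minimality of $\Gamma_0$ is built into the construction: if $\theta(\bar g,\bar x)=\theta(\bar h,\bar x)$ for every $\bar x\in G/G_0$, then $\beta(g,x)=\beta(h,x)$ for every $x\in G$, whence $\beta(g-h,x)=1$ for all $x$, i.e. $g-h\in G_0$ and $\bar g=\bar h$. I expect the only real obstacle to be axiom~(ii): one must be sure the commutation scalar is constant across an entire pair of cosets, not merely on the fine homogeneous pieces $A_g$, $A_h$ — and this constancy is exactly what the definition of $G_0$ and the well-definedness of $\theta$ guarantee.
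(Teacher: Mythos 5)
Your proposal is correct and follows essentially the same route as the paper: both pass to the coarsening induced by the projection $G\to G/G_{0}(\beta)$, verify that $\theta(\bar g,\bar h)=\beta(g,h)$ is well defined (you via the additive expansion of $\beta(g+u,h+v)$, the paper via a chain of equalities using $\beta(g,h)=\beta(h,g)^{-1}$ — the same computation in different clothing), and check regularity and minimality exactly as in the paper. No gaps.
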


\begin{proof}
Suppose the regular decomposition of $A$ is not minimal. Then there exist $g_{1}$, $g_{2} \in G$ such that $g_{1} \neq g_{2}$ and $\operatorname{Col}(g_{1}) = \operatorname{Col}(g_{2})$. Notice that 
\[
\beta(g_{1} - g_{2}, x) = \beta(g_{1}, x)\beta(-g_{2}, x) = \beta(g_{2}, x)\beta(-g_{2}, x)= 1,
\]
for every $x\in G$. This implies that $\operatorname{Col}(0) = \operatorname{Col}(g_{1} - g_{2})$. Therefore, $G_{0}(\beta) \neq \{0\}$. Now, consider the natural projection $\pi \colon G \rightarrow G / G_{0}(\beta)$, and observe that if $\overline{h} = h + G_{0}(\beta)$, then $g \in \pi^{-1}(\overline{h})$ if and only if $g - h \in G_{0}(\beta)$, which is equivalent to saying that
\[
\beta(g - h, x) = 1 \quad \text{for every } x \in G,
\]
or, in other words,
\[
\beta(g,x)=\beta(h,x) \quad \text{for all } x \in G.
\]
Consider the coarsening of $\Gamma$ realized by the epimorpism $\pi$
\[
\Gamma^{\pi}\colon A=\bigoplus_{\overline{h}\in G/G_{0}(\beta)} A_{\overline{h}}'
\]
where
\[
A_{\overline{h}}'=\bigoplus_{g\in \pi^{-1}(\overline{h})} A_{g}.
\]
As we have just seen, the component $A_{\overline{h}}'$ is a direct sum of homogeneous components $A_{g}$ satisfying $\beta(g,x)=\beta(h,x)$, for all $x\in G$. The grading $\Gamma^{\pi}$ is a coarsening of $\Gamma$.

Now we show that $\Gamma^{\pi}$ defines a regular grading. We set $T:=G/G_{0}(\beta)$ and let  $n\in \mathbb{N}$, and $(\overline{g}_{1},\ldots, \overline{g_{n}})\in T^{n}$. By definition, for every $1\leq i\leq n$, $A_{g_{i}}$ is a direct summand of $A_{\overline{g_{i}}}'$, and since $\Gamma$ is a regular grading, there exist $a_{i}\in A_{g_{i}}\subseteq A_{\overline{g_{i}}}'$ such that $a_{1}\cdots a_{n}\neq 0$. Finally, set
\[
\theta\colon T\times T\rightarrow K^{\ast},\quad (\overline{g},\overline{h})\mapsto \beta(g,h).
\]
In order to see that $\theta$ is a bicharacter of $T$ it is enough to show that $\theta$ is well-defined. Suppose $\overline{g}=\overline{g'}$ and $\overline{h}=\overline{h'}$; this means 
\[
\beta(g,x)=\beta(g',x),\quad \text{and}\quad \beta(h,x)=\beta(h',x),\quad \text{for every}\quad x\in G.
\]
Therefore
\[
\beta(g,h)=\beta(g',h)=(\beta(h,g'))^{-1}=(\beta(h',g'))^{-1}=\beta(g',h'),
\]
thus $\theta$ is well-defined. Now, given $a\in A_{\overline{g}}'$ and $b\in A_{\overline{h}'}$, there exist $a_{1}\in A_{g_{1}}$, \dots, $a_{k}\in A_{g_{k}}$, and $b_{1}\in A_{h_{1}}$, \dots, $b_{l}\in A_{h_{l}}$ with $g_{i}\in \pi^{-1}(\overline{g})$, for each $1\leq i\leq k$, and $h_{i}\in \pi^{-1}(\overline{h})$, for each $1\leq i\leq l$, such that $a=\sum_{i=1}^{k}a_{i}$ and $b=\sum_{j=1}^{l}b_{j}$. Thus we get
\begin{align*}
    ab &= \sum_{i}\sum_{j} a_{i}b_{j}=\sum_{j}\sum_{i}\beta(g_{i},h_{j})b_{j}a_{i} = \sum_{j}\sum_{i}\beta(g,h_{j})b_{j}a_{i}\\
    &=\sum_{j}\sum_{i}(\beta(h_{j},g))^{-1}b_{j}a_{i}
       = \sum_{j}\sum_{i}(\beta(h,g))^{-1}b_{j}a_{i}=\sum_{j}\sum_{i}\beta(g,h)b_{j}a_{i}\\
       &=\beta(g,h)ba
       = \theta(\overline{g},\overline{h})ba.
\end{align*}
This implies $A$ is a $T$-graded regular algebra with bicharacter $\theta$. Finally, if $\overline{g}\in T$ is such that $\theta(\overline{g},\overline{h})=1$, for all $\overline{h}\in T$, then, by definition we have
\[
\beta(g,h)=1,\quad \text{for every}\quad h\in G
\]
hence $g\in G_{0}(\beta)$. Therefore, $\overline{g}=0$, which implies $G_{0}(\theta)=\{0\}$. Setting $\Gamma_0$ the grading defined by $T$ we conclude that the regular decomposition of $A$ with respect to $\Gamma_{0}$ is minimal.
\end{proof}

\begin{Lemma}
\label{Key.remark}
Let $G$ and $H$ be finite abelian groups and let $\alpha \colon G \rightarrow H$ be an epimorphism. Let $A$ be an algebra, $\Gamma$ a $G$-grading of $A$ and $\Gamma'$ the $H$-grading of $A$ realized by $\alpha$ by means of coarsening.   

Then, a polynomial $t(x_{1}^{(h_{1})},\ldots, x_{r}^{(h_{r})})\in K\langle X_{H}\rangle$ is in $\mathscr{X}_{A,\Gamma'}$ if and only if $t_{G}:=t(\nu_{1},\ldots, \nu_{r})\in K\langle X_{G}\rangle$ is in $\mathscr{X}_{A,\Gamma}$, where $
\nu_{i}=\sum_{j_{i}=1}^{k_{i}} x_{i}^{(g_{j_{i},i})}$, and $\alpha^{-1}(h_{i})=\{g_{1,i},\ldots, g_{k_{i},i}\}$.
\end{Lemma}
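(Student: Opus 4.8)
The plan is to reduce the biconditional to a single bookkeeping fact: the \emph{set} of values that $t$ attains over all admissible $\Gamma'$-substitutions coincides with the set of values that $t_{G}$ attains over all admissible $\Gamma$-substitutions. Once this is established, the two conditions defining membership in $\mathscr{X}$—being a graded central polynomial (all values lie in $Z(A)$) and being proper (some value is nonzero, i.e.\ the polynomial is not a graded identity)—are read off simultaneously from this common set of values, and the equivalence $t\in\mathscr{X}_{A,\Gamma'}\iff t_{G}\in\mathscr{X}_{A,\Gamma}$ follows at once.

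First I would record the substitution-composition identity. Fix $G$-homogeneous elements $a_{j_{i},i}\in A_{g_{j_{i},i}}$ for $1\le i\le r$ and $1\le j_{i}\le k_{i}$, and set $b_{i}:=\sum_{j_{i}=1}^{k_{i}}a_{j_{i},i}$. Since $A_{h_{i}}'=\bigoplus_{g\in\alpha^{-1}(h_{i})}A_{g}$ and $\alpha^{-1}(h_{i})=\{g_{1,i},\ldots,g_{k_{i},i}\}$, we have $b_{i}\in A_{h_{i}}'$, so $(b_{1},\ldots,b_{r})$ is an admissible $\Gamma'$-substitution for $t$. As $t_{G}=t(\nu_{1},\ldots,\nu_{r})$ is obtained by the formal substitution $x_{i}^{(h_{i})}\mapsto\nu_{i}$ in $K\langle X_{G}\rangle$, evaluating $t_{G}$ at $x_{i}^{(g_{j_{i},i})}\mapsto a_{j_{i},i}$ amounts to first evaluating each $\nu_{i}$ to obtain $b_{i}$ and then substituting $x_{i}^{(h_{i})}\mapsto b_{i}$ in $t$; that is,
\[
t_{G}\big(\ldots,a_{j_{i},i},\ldots\big)=t(b_{1},\ldots,b_{r}).
\]
No multilinearity of $t$ is needed: this is simply associativity of composition of substitutions in the free graded algebra.

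Next I would verify that the correspondence is surjective in both directions, so that the two value-sets are equal. Every admissible $\Gamma$-substitution for $t_{G}$ assigns a homogeneous $a_{j_{i},i}\in A_{g_{j_{i},i}}$ to each variable $x_{i}^{(g_{j_{i},i})}$, and the displayed identity turns its value into a $\Gamma'$-value of $t$. Conversely, given any admissible $\Gamma'$-substitution $(b_{1},\ldots,b_{r})$ with $b_{i}\in A_{h_{i}}'$, the directness of the $G$-grading lets me decompose $b_{i}=\sum_{j_{i}}a_{j_{i},i}$ uniquely with $a_{j_{i},i}\in A_{g_{j_{i},i}}$, and the same identity identifies $t(b_{1},\ldots,b_{r})$ with a $\Gamma$-value of $t_{G}$. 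Hence
\[
\{\, t(b_{1},\ldots,b_{r}) : b_{i}\in A_{h_{i}}' \,\}=\{\, t_{G}(\ldots,a_{j_{i},i},\ldots) : a_{j_{i},i}\in A_{g_{j_{i},i}} \,\}.
\]

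Finally I would conclude from the equality of value-sets. On one hand, $t$ takes only central values under $\Gamma'$ if and only if $t_{G}$ takes only central values under $\Gamma$, i.e.\ $t$ is a graded central polynomial for $(A,\Gamma')$ iff $t_{G}$ is one for $(A,\Gamma)$. On the other hand, some $\Gamma'$-value of $t$ is nonzero iff some $\Gamma$-value of $t_{G}$ is nonzero, i.e.\ $t\notin T_{H}(A)$ iff $t_{G}\notin T_{G}(A)$. Combining these two equivalences yields $t\in\mathscr{X}_{A,\Gamma'}\iff t_{G}\in\mathscr{X}_{A,\Gamma}$, as desired. The argument is essentially formal; the only point demanding care is the bookkeeping of the index sets, together with the observation that the variables $x_{i}^{(g_{j_{i},i})}$ occurring in the various $\nu_{i}$ are pairwise distinct (distinct pairs $(i,g_{j_{i},i})$ label distinct free generators), so that the formal substitution defining $t_{G}$ is legitimate and the backward decomposition is unambiguous.
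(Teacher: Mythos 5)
Your proof is correct and follows essentially the same route as the paper: both rest on the substitution-composition identity $t_{G}(\ldots,a_{j_i,i},\ldots)=t(b_1,\ldots,b_r)$ with $b_i=\sum_{j_i}a_{j_i,i}$, together with the observation that the $G$-homogeneous decompositions of elements of $A_{h_i}'$ put the two families of admissible substitutions in bijection. Your reformulation as an equality of value-sets merely packages the paper's two directions into one statement.
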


\begin{proof}
If $t\in \mathscr{X}_{A,\Gamma'}$ there exist $a_{1}\in A_{h_{1}}'$,  \dots, $a_{r}\in A_{h_{r}}'$ such that $t(a_{1},\ldots, a_{r})\neq 0$. Writing
\[
 a_{i}=\sum_{j_{i}=1}^{k_{i}}a_{g_{j_{i},i}},\quad a_{g_{j_{i},i}}\in A_{g_{j_{i},i}}
\]
we have $t_{G}(a_{g_{1,1}},\ldots, a_{g_{k_{1},1}},\ldots, a_{g_{k_{r}},r})\neq 0$, that is $t_{G}\notin T_{G}(A)$. Given homogeneous elements $b_{g_{1,1}}\in A_{g_{1,1}}$, \dots, $b_{g_{k_{r}},r}\in A_{g_{k_r},r}$, put $b_{i}:= \sum_{j_{i}=1}^{k_{i}}b_{g_{j_{i},i}}$. 
Then $b_{i}\in A_{h_{i}}'$, for every $1\leq i\leq r$. Hence $t(b_{1},\ldots, b_{r})\in Z(A,\Gamma')$. On the other hand, $t_{G}(b_{g_{1,1}},\ldots, b_{g_{k_{r}},r})=t(b_{1},\ldots, b_{r})$, therefore $t_{G}(b_{g_{1,1}},\ldots, b_{g_{k_{r}},r})\in Z(A,\Gamma)$. 
Suppose $t_{G}\in \mathscr{X}_{A,\Gamma}$, then given homogeneous elements $a_{1}\in A_{h_{1}}'$,\dots, $a_{r}\in A_{h_{r}}'$, we write 
$a_{i}=\sum_{j_{i}=1}^{k_{i}}a_{g_{j_{i},i}}$ as above and we get 
\[
t(a_{1},\ldots, a_{r})=t_{G}(a_{g_{1,1}},\ldots, a_{g_{k_{r}},r}).
\]
We conclude $t\in \mathscr{X}_{A,\Gamma'}$. 
\end{proof}

\begin{Lemma}
\label{Lemma.aux.1}
Let $\Gamma \colon A = \bigoplus_{g \in G} A_g$ be a $G$-graded algebra, and let $\Gamma' \colon A = \bigoplus_{h \in H} A_{h}'$ be a coarsening of $\Gamma$ realized by an epimorphism $\alpha \colon G \rightarrow H$. If $(A,\Gamma)$ satisfies the primeness property for graded central polynomials, then so does $(A,\Gamma')$.
\end{Lemma}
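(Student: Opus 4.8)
The plan is to reduce the primeness property for $\Gamma'$ to the one for $\Gamma$ by transporting both hypotheses and conclusions through the substitution map of Lemma \ref{Key.remark}. Concretely, I would start from two graded polynomials $f(x_{1}^{(h_{1})},\ldots, x_{r}^{(h_{r})})$ and $g(x_{r+1}^{(h_{r+1})},\ldots, x_{s}^{(h_{s})})$ in $K\langle X_{H}\rangle$ in disjoint sets of variables, assume $fg \in \mathscr{X}_{A,\Gamma'}$, and aim to show that both $f$ and $g$ lie in $\mathscr{X}_{A,\Gamma'}$.

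First I would apply the operation $t \mapsto t_{G}$ from Lemma \ref{Key.remark}. Since this is nothing but the graded algebra homomorphism $K\langle X_{H}\rangle \to K\langle X_{G}\rangle$ that replaces each $H$-homogeneous variable $x_{i}^{(h_{i})}$ by the fiber sum $\nu_{i}=\sum_{j_{i}} x_{i}^{(g_{j_{i},i})}$ over $\alpha^{-1}(h_{i})$, it is multiplicative, whence $(fg)_{G}=f_{G}\,g_{G}$. By Lemma \ref{Key.remark}, the assumption $fg \in \mathscr{X}_{A,\Gamma'}$ is equivalent to $(fg)_{G}\in \mathscr{X}_{A,\Gamma}$, so $f_{G}\,g_{G}\in \mathscr{X}_{A,\Gamma}$.

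The one bookkeeping point that needs care is that $f_{G}$ and $g_{G}$ remain in disjoint sets of variables. This holds because the substitution introduces new $G$-variables that retain the original subscript $i$: the variables occurring in $f_{G}$ carry subscripts $1\le i\le r$, while those in $g_{G}$ carry subscripts $r+1\le i\le s$, and these index ranges are disjoint. With disjointness secured, I would invoke the hypothesis that $(A,\Gamma)$ satisfies the primeness property (Definition \ref{primeness.definition}), applied to $f_{G}\,g_{G}\in \mathscr{X}_{A,\Gamma}$, to obtain $f_{G}\in \mathscr{X}_{A,\Gamma}$ and $g_{G}\in \mathscr{X}_{A,\Gamma}$.

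Finally I would run Lemma \ref{Key.remark} in the reverse direction for $f$ and $g$ separately: from $f_{G}\in \mathscr{X}_{A,\Gamma}$ the equivalence yields $f\in \mathscr{X}_{A,\Gamma'}$, and likewise $g\in \mathscr{X}_{A,\Gamma'}$, which is exactly the primeness property for $(A,\Gamma')$. The argument is essentially formal once Lemma \ref{Key.remark} is available; I do not anticipate any genuine analytic or combinatorial obstacle, and the only real verification is the disjointness claim above, which is immediate from how the fiber substitution is indexed.
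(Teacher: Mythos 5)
Your argument is correct and is essentially identical to the paper's proof: both transport $fg\in\mathscr{X}_{A,\Gamma'}$ to $f_{G}g_{G}\in\mathscr{X}_{A,\Gamma}$ via Lemma \ref{Key.remark}, apply the primeness property for $(A,\Gamma)$, and then run Lemma \ref{Key.remark} backwards on $f$ and $g$ separately. Your explicit check that $f_{G}$ and $g_{G}$ remain in disjoint sets of variables is a small point the paper leaves implicit, but otherwise the two proofs coincide.
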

\begin{proof}
Assume that $(A,\Gamma)$ satisfies the primeness property for graded central polynomials. Let 
\[
f(x_{1}^{(\mathbf{h}_{1})},\ldots, x_{r}^{(\mathbf{h}_{r})}) \quad \text{and} \quad g(x_{r+1}^{(\mathbf{h}_{r+1})},\ldots, x_{s}^{(\mathbf{h}_{s})})
\]
be polynomials of $K\langle X_{H}\rangle$ in disjoint sets of variables and suppose their product
\[
f(x_{1}^{(\mathbf{h}_{1})},\ldots, x_{r}^{(\mathbf{h}_{r})})g(x_{r+1}^{(\mathbf{h}_{r+1})},\ldots, x_{s}^{(\mathbf{h}_{s})})
\]
lies in $\mathscr{X}_{A,\Gamma'}$. Recall that
\[
A = \bigoplus_{\mathbf{h} \in H} A_{\mathbf{h}}', \quad \text{where} \quad A_{\mathbf{h}}' = \bigoplus_{g \in \alpha^{-1}(\mathbf{h})} A_g, \quad \text{for every } \mathbf{h} \in H.
\]

For each $i\in\{1,\ldots,s\}$, we choose $a_i \in A_{\mathbf{h}_i}'$. Since
\[
A_{\mathbf{h}_i}' = \bigoplus_{g \in \alpha^{-1}(\mathbf{h}_i)} A_g,
\]
there exist $g_{1,i}$, \dots,  $g_{k_i,i} \in \alpha^{-1}(\mathbf{h}_i)$ and $a_{g_{j,i}} \in A_{g_{j,i}}$ such that $a_{i} = \sum_{j} a_{g_{j,i}}$.

Consider the polynomials
\[
h(x_{1}^{(\mathbf{h}_1)}, \ldots, x_{s}^{(\mathbf{h}_s)}) = f(x_{1}^{(\mathbf{h}_1)}, \ldots, x_{r}^{(\mathbf{h}_r)}) g(x_{r+1}^{(\mathbf{h}_{r+1})}, \ldots, x_{s}^{(\mathbf{h}_s)})
\]
and
\[
h_{G} = f(\nu_1, \ldots, \nu_r)g(\nu_{r+1}, \ldots, \nu_s),
\]
as in Lemma \ref{Key.remark}, and notice that $h_G = f_G g_G$. Since $h\in \mathscr{X}_{A,\Gamma'}$, by Lemma \ref{Key.remark} we have $h_{G}\in \mathscr{X}_{A,\Gamma}$. By the primeness property of $(A, \Gamma)$, both $f_{G}$ and $g_{G}$ are proper graded central polynomials for $(A, \Gamma)$. Once again, by applying Lemma \ref{Key.remark}, it follows that $f$ and $g$ are proper graded central polynomials for $(A, \Gamma')$.
\end{proof}

We need the following technical results.
\begin{Lemma}
\label{equiv.to.multilinear}
Let $\Gamma\colon A=\bigoplus_{g\in G}A_{g}$ be a $G$-graded regular algebra with bicharacter $\beta$. Suppose $A$ has the primeness property for graded central multilinear polynomials. Then, $A$ has the primeness property for graded central polynomials. 
\end{Lemma}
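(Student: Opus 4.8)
The plan is to pass from arbitrary polynomials to multilinear ones by the standard characteristic-zero machinery, and then to exploit the regularity of the grading to transfer the multilinear conclusion back to every multihomogeneous component. Suppose $f(x_1^{(g_1)},\ldots,x_r^{(g_r)})$ and $g(x_{r+1}^{(g_{r+1})},\ldots,x_s^{(g_s)})$ are in disjoint sets of variables with $fg\in\mathscr{X}_{A,\Gamma}$. Decompose $f=\sum_\alpha f_\alpha$ and $g=\sum_\beta g_\beta$ into their multihomogeneous components. Because the variable sets are disjoint, the products $f_\alpha g_\beta$ are precisely the multihomogeneous components of $fg$, each being a component of $f$ times a component of $g$. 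Since $\mathscr{C}_{A,\Gamma}$ is invariant under the graded endomorphisms $x_i\mapsto \lambda x_i$, a Vandermonde argument (valid as $\operatorname{char}K=0$) shows each $f_\alpha g_\beta$ is graded central; and since $T_G(A)$ is multihomogeneous, $fg\notin T_G(A)$ forces $f_{\alpha_0}g_{\beta_0}\notin T_G(A)$ for some pair $(\alpha_0,\beta_0)$.

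Next I would fully linearize. Full linearization is again built from graded endomorphisms (substituting each variable by a sum of fresh variables of the same homogeneous degree) followed by extraction of multilinear parts, so it sends graded central polynomials to graded central polynomials and, in characteristic $0$, preserves both membership and non-membership in $T_G(A)$. As the variables of $f$ and $g$ are disjoint, the linearization of $f_\alpha g_\beta$ is a product $\overline{f_\alpha}\,\overline{g_\beta}$ of multilinear polynomials in disjoint variables. In particular $\overline{f_{\alpha_0}}\,\overline{g_{\beta_0}}$ is a proper multilinear graded central polynomial, so the multilinear primeness hypothesis yields that $\overline{f_{\alpha_0}}$ and $\overline{g_{\beta_0}}$ are proper graded central; delinearizing (identifying the fresh variables, a graded substitution) returns that $f_{\alpha_0}$ and $g_{\beta_0}$ are graded central and not identities.

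The regularity of the grading supplies the tool needed to bootstrap from these two good components to all of them. A multilinear graded polynomial $p(x_1^{(\gamma_1)},\ldots,x_m^{(\gamma_m)})$ on a regular algebra evaluates, by the commutation relations of Definition \ref{def.regular.gradings}(ii), to a single scalar multiple $\lambda_p\,a_1\cdots a_m$ of the standard ordered product, independently of the homogeneous $a_i\in A_{\gamma_i}$; hence, by condition (i), $p\notin T_G(A)$ exactly when $\lambda_p\neq 0$. Consequently, if $\overline{f_\alpha}$ and $\overline{g_{\beta_0}}$ both lie outside $T_G(A)$ then so does their product: choosing, via (i), homogeneous arguments whose full ordered product is nonzero, the value of $\overline{f_\alpha}\,\overline{g_{\beta_0}}$ equals $\lambda_f\lambda_g$ times that nonzero product. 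Equivalently, if $\overline{f_\alpha}\,\overline{g_{\beta_0}}\in T_G(A)$ while $\overline{g_{\beta_0}}\notin T_G(A)$, then necessarily $\overline{f_\alpha}\in T_G(A)$.

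With this in hand the argument closes. Fix the index $\beta_0$. For each $\alpha$ the polynomial $f_\alpha g_{\beta_0}$ is graded central, so $\overline{f_\alpha}\,\overline{g_{\beta_0}}$ is multilinear graded central. If it is proper, multilinear primeness makes $\overline{f_\alpha}$, and hence $f_\alpha$, graded central; if it lies in $T_G(A)$, the previous paragraph forces $f_\alpha\in T_G(A)$, which is trivially central. Either way every $f_\alpha$ is graded central, so $f=\sum_\alpha f_\alpha$ is graded central, and $f\notin T_G(A)$ because $f_{\alpha_0}\notin T_G(A)$; thus $f\in\mathscr{X}_{A,\Gamma}$. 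The symmetric computation with the fixed index $\alpha_0$ gives $g\in\mathscr{X}_{A,\Gamma}$. I expect the main obstacle to be exactly this bootstrapping step: multilinear primeness only speaks about the single proper component $\overline{f_{\alpha_0}}\,\overline{g_{\beta_0}}$, and it is regularity — through the scalar-multiple evaluation together with condition (i), which prevents a product of non-identities from becoming an identity — that lets one promote the conclusion to all components and recombine them into the full polynomials $f$ and $g$.
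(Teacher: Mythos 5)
Your argument is correct, and it is in substance the same reduction the paper performs, except that the paper compresses it into two citations: it invokes \cite[Proposition 4.10]{DinizFidelis} to assert that a $G$-graded regular algebra is $G$-graded strongly verbally prime, and then refers to the proof of \cite[Proposition 3.10]{diniz2015primeness} for the passage from multilinear to arbitrary polynomials. What you have done is supply both ingredients directly. Your multihomogeneous decomposition of $fg$ into the components $f_\alpha g_\beta$, the Vandermonde argument using the invariance of $\mathscr{C}_{A,\Gamma}$ under graded endomorphisms, and the linearization/delinearization step (valid in characteristic $0$, with the product structure preserved because the variable sets are disjoint) constitute the content of the cited reduction. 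Your observation that a multilinear graded polynomial on a regular algebra evaluates to $\lambda_p\,a_1\cdots a_m$ with $\lambda_p$ depending only on $p$ and the degrees, so that $p\in T_G(A)$ iff $\lambda_p=0$ and hence a product of multilinear non-identities in disjoint variables is never an identity, is exactly the multilinear case of strong verbal primeness that the paper imports from \cite{DinizFidelis}. You also correctly identified the one genuinely delicate point: the multilinear primeness hypothesis only applies to the components $f_\alpha g_{\beta_0}$ that remain proper, and it is the strong-verbal-primeness consequence of regularity that disposes of the components whose product degenerates to an identity. The trade-off is the usual one: the paper's proof is shorter but opaque without the references, while yours is longer but self-contained and makes visible where regularity (rather than mere centrality considerations) is actually used.
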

\begin{proof} Indeed, by \cite[Proposition 4.10]{DinizFidelis}, $A$ is a $G$-graded strongly verbally prime algebra. Therefore, from \cite{DinizFidelis}, the proof is the same as  in \cite[Proposition 3.10]{diniz2015primeness}.
\end{proof}

\begin{Lemma} 
\label{Lemma.aux.2}
Let $\Gamma \colon A = \bigoplus_{g \in G} A_{g}$ be a $G$-graded regular algebra with bicharacter $\beta$, and let $\Gamma_{0}$ the minimal grading on $A$ realized by $\pi\colon G\rightarrow H:=G/G_{0}(\beta)$. Then, if $D(x_{1}^{(g_{1})},\ldots, x_{n}^{(g_{n})})\in K\langle X_{G}\rangle$ is a multilinear graded polynomial, we have
\[
D(x_{1}^{(g_{1})},\ldots, x_{n}^{(g_{n})})\in \mathscr{X}_{A,\Gamma},\quad \text{if and only if}\quad D(\pi)\in \mathscr{X}_{A,\Gamma_{0}}
\]
 where $D(\pi)\in K\langle X_{H}\rangle$ is defined by $D(\pi):=D(x_{1}^{(\pi(g_{1}))},\ldots, x_{n}^{(\pi(g_{n}))})$.
\end{Lemma}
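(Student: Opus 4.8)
The plan is to use multilinearity together with the two regularity axioms to collapse every homogeneous evaluation of $D$ into a single scalar times the canonically ordered product, and then to track how that scalar and the total degree behave under $\pi$.

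First I would write $D=\sum_{\sigma\in S_{n}}\lambda_{\sigma}\,x_{\sigma(1)}\cdots x_{\sigma(n)}$ as a combination of the $n!$ permuted monomials. For homogeneous $a_{i}\in A_{g_{i}}$, condition (ii) of Definition \ref{def.regular.gradings} lets me sort each monomial into canonical order at the cost of a scalar $\gamma_{\sigma}$ that is a product of values $\beta(g_{i},g_{j})$ indexed by the inversions of $\sigma$. This gives
\[
D(a_{1},\dots,a_{n})=c_{D}\,a_{1}\cdots a_{n},\qquad c_{D}:=\sum_{\sigma\in S_{n}}\lambda_{\sigma}\gamma_{\sigma},
\]
a scalar depending only on the coefficients of $D$, the degrees $g_{i}$, and $\beta$. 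Running the same computation for $D(\pi)$ on $(A,\Gamma_{0})$ yields a scalar $c_{D(\pi)}$ assembled from $\theta$ and the degrees $\pi(g_{i})$; since $\theta(\pi(g),\pi(h))=\beta(g,h)$ by the construction in Proposition \ref{reduction}, each factor of $\gamma_{\sigma}$ is preserved, so $c_{D(\pi)}=c_{D}$.

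Next I would dispose of properness. As both $\Gamma$ and $\Gamma_{0}$ are regular (the latter by Proposition \ref{reduction}), condition (i) provides homogeneous substitutions with $a_{1}\cdots a_{n}\neq0$, respectively $b_{1}\cdots b_{n}\neq0$; hence $D\notin T_{G}(A)$ if and only if $c_{D}\neq0$, and $D(\pi)\notin T_{H}(A)$ if and only if $c_{D(\pi)}\neq0$, and these agree by the equality just noted. The centrality step is where the real work lies. Putting $s:=g_{1}+\cdots+g_{n}$, I claim the canonical monomial is a graded central polynomial for $(A,\Gamma)$ precisely when $s\in G_{0}(\beta)$: if $s\in G_{0}(\beta)$ then $\beta(s,h)=1$ for all $h$, so every degree-$s$ product commutes with all homogeneous elements and is central; if instead $\beta(s,h)\neq1$ for some $h$, then condition (i) applied to the enlarged tuple $(g_{1},\dots,g_{n},h)$ produces $z:=a_{1}\cdots a_{n}$ with $za_{h}\neq0$, whence $za_{h}-a_{h}z=(1-\beta(s,h)^{-1})za_{h}\neq0$ and $z\notin Z(A)$. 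Since $c_{D}\neq0$ forces $D(a_{1},\dots,a_{n})$ to be a nonzero scalar multiple of $a_{1}\cdots a_{n}$, it follows that (for $c_{D}\neq0$) $D$ is central if and only if $s\in G_{0}(\beta)$, while $c_{D}=0$ makes $D$ a mere identity.

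Finally I would combine the pieces: $D\in\mathscr{X}_{A,\Gamma}$ if and only if $c_{D}\neq0$ and $s\in G_{0}(\beta)$, and symmetrically $D(\pi)\in\mathscr{X}_{A,\Gamma_{0}}$ if and only if $c_{D(\pi)}\neq0$ and $\pi(g_{1})+\cdots+\pi(g_{n})\in G_{0}(\theta)$. Because $\Gamma_{0}$ is minimal we have $G_{0}(\theta)=\{0\}$ (Proposition \ref{reduction}), so its degree condition reads $\pi(s)=0$, i.e.\ $s\in G_{0}(\beta)$; together with $c_{D(\pi)}=c_{D}$ the two characterizations coincide, which gives the desired equivalence. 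The main obstacle I anticipate is precisely the centrality criterion for the canonical product: there one must use condition (i) not merely to keep $a_{1}\cdots a_{n}$ nonzero but to prolong it by a homogeneous factor $a_{h}$ that witnesses the non-central total degree, and then invoke $G_{0}(\theta)=\{0\}$ to align the degree conditions on the two sides.
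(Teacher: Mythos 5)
Your proposal is correct and follows essentially the same route as the paper: reduce to the canonically ordered monomial using the commutation axiom (the paper does this by citing the known description of graded identities of regular algebras, you do it explicitly via the scalar $c_{D}$), characterize centrality of the evaluation by the condition $\beta(g_{1}+\cdots+g_{n},h)=1$ for all $h$, and align the two gradings through $\theta(\pi(g),\pi(h))=\beta(g,h)$ together with $G_{0}(\theta)=\{0\}$. Your symmetric ``$D\in\mathscr{X}_{A,\Gamma}$ iff $c_{D}\neq 0$ and $\sum g_{i}\in G_{0}(\beta)$'' packaging is a clean way to organize the same computation.
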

\begin{proof} 
Modulo the \(G\)-graded identities of \(A\) (for more details on this \(T_G\)-ideal, we refer to \cite[Remark 3]{doi:10.1142/S0218196717500436}), we may assume that \(D\) is a multilinear monomial in \(K\langle X_G\rangle\), say \(D = x_{1}^{(g_{1})} \cdots x_{n}^{(g_{n})}\).

Suppose $D\in \mathscr{X}_{A,\Gamma}$. We need to show that $D(\pi)=x_{1}^{(\pi(g_{1}))} \cdots x_{n}^{(\pi((g_{n}))}\in \mathscr{X}_{A,\Gamma'}$. Set $h_{1}:=\pi(g_{1})$, \dots, $h_{n}:=\pi(g_{n})$ and for each $1\leq i\leq n$, consider the preimage $\pi^{-1}(h_{i})=\{g_{1,i},\ldots, g_{k_{i},i}\}$, with $g_{1,i}=g_{i}$. 
 In particular,
\[
\pi(g_{1,i})=\cdots=\pi( g_{k_{i},i})=h_{i},
\]
and by the definition of $\pi$, it follows that for every $1\leq j\neq j'\leq k_{i}$
\begin{equation}
\label{eq.aux.1}
    \beta(g_{j,i},h)=\beta(g_{j',i},h),\quad \text{for all}\quad h\in G.
\end{equation}
Since $D\in \mathscr{X}_{A,\Gamma}$, we conclude 
\begin{equation}
\label{eq.aux.1.1}
\beta(g_{1,1}+\cdots+g_{1,n},h)=1,\quad \text{for every}\quad h\in G.
\end{equation}
Now, given homogeneous elements $a_{1}\in A_{h_{1}}'$,\dots, $a_{n}\in A_{h_{n}}'$, write
\[
a_{i}:= \sum_{j_{i}=1}^{k_{i}}a_{g_{j_{i},i}}.
\]
Therefore,
\[
D(\pi)(a_{1},\ldots, a_{n})=\sum_{j_{1}}\cdots \sum_{j_{n}} a_{g_{j_{1},1}}\cdots a_{g_{j_{n},n}}.
\]
For each $n$-tuple $(j_{1},\ldots, j_{n})$ appearing in the sum above, we have the following
\begin{align*}
    \beta(g_{j_{1},1}+g_{j_{2},2}+\cdots+g_{j_{n},n},h) &= \beta(g_{j_{1},1},h)\cdots \beta(g_{j_{n},n},h)\\
    & \overset{=}{(\ref{eq.aux.1})}  \beta(g_{1,1},h)\cdots \beta(g_{1,n},h)\\
    &= \beta(g_{1,1}+\cdots+ g_{1,n},h)=1,\quad \text{for any}\quad h\in G.
\end{align*}
Hence, $D(\pi)$ lies $\mathscr{C}_{A,\Gamma_{0}}$. 

On the other hand, let $D(\pi)\in \mathscr{X}_{A,\Gamma_{0}}$. As above, for $a_{1}\in A_{g_{1}}$, \dots, $a_{n}\in A_{g_{n}}$, by construction,  $D(\pi)(a_{1},\ldots, a_{n})=D(a_{1},\ldots, a_{n})$, that is $D\in \mathscr{C}_{A,\Gamma}$.

Finally, \(D(\pi)\) and \(D\) are not graded identities for \((A,\Gamma')\) and \((A,\Gamma)\), respectively. This follows from the fact that they are multilinear graded monomials and that these gradings are \(G\) and $H$-regular. By definition, there are no graded monomials that are identities for such graded algebras. Hence, the result follows.
\end{proof}

Notice that the previous result slightly differs from Lemma \ref{Key.remark}. In that case, the result does not assert that every central proper polynomial in \(K\langle X_G\rangle\) follows from another central proper polynomial in \(K\langle X_H\rangle\), but rather establishes this only in a specific situation. However, the latter proposition extends this conclusion when a \(G\)-graded regular algebra is considered.

\begin{Proposition}
\label{Min.equiv}
Let $\Gamma \colon A = \bigoplus_{g \in G} A_{g}$ be a $G$-graded regular algebra. Then, $A$ satisfies the primeness property with respect to $\Gamma$ if and only if $A$ satisfies the primeness property with respect to the minimal grading $\Gamma_{0}$.
\end{Proposition}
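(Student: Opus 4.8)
The proposition is a biconditional, and its two implications are of very different character, so I would treat them separately. For the forward implication I would observe that, by Proposition \ref{reduction}, the minimal grading $\Gamma_{0}$ is a coarsening of $\Gamma$, realized by the natural projection $\pi\colon G\rightarrow G/G_{0}(\beta)$. Consequently, assuming $(A,\Gamma)$ satisfies the primeness property, Lemma \ref{Lemma.aux.1} delivers at once that $(A,\Gamma_{0})$ does too. This direction requires no further work.

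The substantive content lies in the converse: passing from primeness of the coarser grading $\Gamma_{0}$ \emph{up} to primeness of the finer grading $\Gamma$ is not a formal consequence of coarsening and genuinely needs the regular structure. The plan is to first invoke Lemma \ref{equiv.to.multilinear}: since $A$ is $G$-graded regular, to establish the primeness property for $(A,\Gamma)$ it suffices to establish it for multilinear graded central polynomials. I would therefore take multilinear polynomials $f(x_{1}^{(g_{1})},\ldots,x_{r}^{(g_{r})})$ and $g(x_{r+1}^{(g_{r+1})},\ldots,x_{s}^{(g_{s})})$ in disjoint sets of variables with $fg\in \mathscr{X}_{A,\Gamma}$, and aim to conclude $f,g\in \mathscr{X}_{A,\Gamma}$.

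The bridge between the two gradings is Lemma \ref{Lemma.aux.2}, which I would apply three times. Since $fg$ is multilinear and lies in $\mathscr{X}_{A,\Gamma}$, the lemma gives $(fg)(\pi)\in \mathscr{X}_{A,\Gamma_{0}}$. The key observation is that applying the degree-relabelling $x_{i}^{(g_{i})}\mapsto x_{i}^{(\pi(g_{i}))}$ to a product is the same as multiplying the relabelled factors, so $(fg)(\pi)=f(\pi)\,g(\pi)$; moreover $f(\pi)$ and $g(\pi)$ still involve disjoint sets of variables, since the relabelling changes only homogeneous degrees and not the indices distinguishing the variables. Now primeness of $(A,\Gamma_{0})$ forces both $f(\pi)$ and $g(\pi)$ to lie in $\mathscr{X}_{A,\Gamma_{0}}$, and a second and third application of Lemma \ref{Lemma.aux.2}, this time in the reverse direction and to $f$ and $g$ separately, yields $f,g\in \mathscr{X}_{A,\Gamma}$. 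This establishes the primeness property for multilinear polynomials, hence by Lemma \ref{equiv.to.multilinear} for all graded central polynomials.

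The main obstacle, as I see it, is bookkeeping rather than a deep difficulty: one must verify carefully that $(fg)(\pi)=f(\pi)g(\pi)$ with disjointness of variable sets preserved, and that Lemma \ref{Lemma.aux.2} is genuinely reversible for each multilinear factor. The place where regularity is indispensable is precisely here: without the bicharacter structure one could neither reduce to multilinear polynomials (Lemma \ref{equiv.to.multilinear}) nor transfer membership in $\mathscr{X}$ back and forth through $\pi$ (Lemma \ref{Lemma.aux.2}), and so in general a finer grading need not inherit primeness from one of its coarsenings.
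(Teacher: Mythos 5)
Your proposal is correct and follows essentially the same route as the paper: Lemma \ref{Lemma.aux.1} disposes of the forward implication, and the converse is handled by reducing to multilinear polynomials via Lemma \ref{equiv.to.multilinear} and then transferring membership in $\mathscr{X}$ back and forth through $\pi$ with Lemma \ref{Lemma.aux.2}, using that $(fg)(\pi)=f(\pi)g(\pi)$ with disjoint variable sets preserved. Your extra remarks on bookkeeping only make explicit what the paper's proof takes for granted.
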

\begin{proof} Let $H:=G/G_{0}(\beta)$, then by Lemma \ref{Lemma.aux.1}, we only need to show $(A,\Gamma)$ satisfies the primeness property assuming $(A,\Gamma_0)$ does. According to Lemma \ref{equiv.to.multilinear}, it is enough to show $(A,\Gamma)$ satisfies the primeness property for graded central multilinear polynomials. Let $f$ and $g\in K\langle X_{H}\rangle$ be graded multilinear polynomials in disjoint variables such that $fg\in \mathscr{X}_{A,\Gamma}$. Then, by Lemma \ref{Lemma.aux.2}, $f(\pi)g(\pi)\in \mathscr{X}_{A,\Gamma_{0}}$. Since  $f(\pi)$ and $g(\pi)$ have disjoint sets of variables, by the primeness property $f(\pi)\in \mathscr{X}_{A,\Gamma_{0}}$ and $g(\pi)\in \mathscr{X}_{A,\Gamma_{0}}$. Therefore, once again by Lemma \ref{Lemma.aux.2}, $f\in \mathscr{X}_{A,\Gamma}$ and $g\in \mathscr{X}_{A,\Gamma}$ and we are done.
\end{proof}

\begin{Theorem} Let $\Gamma\colon   A=\bigoplus_{g\in G}A_{g}$ be a $G$-graded regular algebra. Then $(A,\Gamma)$ does not satisfy the primeness property for graded central polynomials.  
\end{Theorem}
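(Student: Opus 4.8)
The plan is to reduce to the minimal case and then exhibit an explicit pair of monomials witnessing the failure of primeness. By Proposition~\ref{Min.equiv}, $(A,\Gamma)$ satisfies the primeness property if and only if the minimal grading $(A,\Gamma_{0})$ realized by $\pi\colon G\to H:=G/G_{0}(\beta)$ does, so it suffices to produce a counterexample for $(A,\Gamma_{0})$. Equivalently, I may assume from the outset that the grading is minimal, i.e.\ $G_{0}(\beta)=\{0\}$. Here I must tacitly assume $H\neq\{0\}$: if instead $\beta\equiv 1$ (equivalently $H$ trivial), then $A$ is commutative, every polynomial is central, and the statement degenerates; this is the uninteresting case that the hypothesis of a genuine regular grading is meant to exclude.

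The first step I would record is a centrality criterion for homogeneous elements: \emph{a nonzero homogeneous element $b\in A_{s}$ lies in $Z(A)$ if and only if $s\in G_{0}(\beta)$}. If $s\in G_{0}(\beta)$ then $\beta(s,h)=1$ for all $h$, so $b$ commutes with every homogeneous element and hence with all of $A$. Conversely, if $s\notin G_{0}(\beta)$, choose $h\in G$ with $\beta(s,h)\neq 1$; by the existence axiom (i) of Definition~\ref{def.regular.gradings} applied to the pair $(h,s)$ there are $w\in A_{h}$ and $b\in A_{s}$ with $wb\neq 0$, and then $bw=\beta(s,h)wb\neq wb$, so $b\notin Z(A)$. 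In the minimal grading this says precisely that the single-variable monomial $x^{(s)}$ is a graded central polynomial if and only if $s=0$.

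With this in hand I fix any $a\in G$ with $a\neq 0$ and consider the monomials in disjoint variables
\[
f=x_{1}^{(a)},\qquad g=x_{2}^{(-a)}.
\]
Their product $fg=x_{1}^{(a)}x_{2}^{(-a)}$ is homogeneous of total degree $a+(-a)=0\in G_{0}(\beta)$, so every value $uv$ with $u\in A_{a}$, $v\in A_{-a}$ lies in $A_{0}\subseteq Z(A)$; hence $fg$ is a graded central polynomial. By axiom (i) applied to the pair $(a,-a)$ there exist $u\in A_{a}$, $v\in A_{-a}$ with $uv\neq 0$, so $fg\notin T_{G}(A)$ and therefore $fg\in\mathscr{X}_{A,\Gamma_{0}}$. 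On the other hand, the centrality criterion shows $f=x_{1}^{(a)}$ is not even central, as $a\notin G_{0}(\beta)$; thus $f\notin\mathscr{X}_{A,\Gamma_{0}}$ (and likewise $g\notin\mathscr{X}_{A,\Gamma_{0}}$, since $-a\notin G_{0}(\beta)$). This contradicts primeness for $(A,\Gamma_{0})$, and hence for $(A,\Gamma)$.

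The genuinely delicate point, and the one I would justify most carefully, is the converse half of the centrality criterion: non-centrality cannot be read off from $\beta$ alone, since $\beta(s,h)\neq 1$ only forces $bw\neq wb$ once the product $wb$ is known to be nonzero. This is exactly where the existence axiom (i) is indispensable, and it is the same axiom that guarantees the central product $fg$ is proper rather than a graded identity. The reduction via Proposition~\ref{Min.equiv} is what lets me phrase everything through the clean condition $a\neq 0$; working directly in $G$ one would simply choose $a\notin G_{0}(\beta)$, and the argument goes through verbatim.
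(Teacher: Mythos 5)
Your proposal is correct and follows essentially the same route as the paper's proof: reduce to the minimal grading via Proposition~\ref{Min.equiv}, use the regularity axiom (i) together with $\beta(a,\cdot)\not\equiv 1$ to show $A_{a}\not\subseteq Z(A)$ for $a\neq 0$, and then take the disjoint monomials $x_{1}^{(a)}$ and $x_{2}^{(-a)}$, whose product is a proper central polynomial while neither factor is central. Your explicit remark that the argument tacitly requires the minimal quotient $G/G_{0}(\beta)$ to be nontrivial (i.e.\ $A$ noncommutative) is a point the paper's proof also implicitly assumes when it writes $G=\{0,g,g_{1},\ldots,g_{k}\}$ with $g\neq 0$.
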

\begin{proof} 
By Proposition \ref{Min.equiv}, we assume that the $G$-grading on the $G$-graded regular algebra $A$ is minimal.  Write $G=\{0,g,g_{1},\ldots, g_{k}\}$, where $g\neq 0$. By regularity of $A$, there exist $a_{0}\in A_{0}$, $a_{g}\in A_{g}$, \dots, $a_{g_{k}}\in A_{g_{k}}$ such that $a_{0}a_{g}\cdots a_{g_{k}}\neq 0$. In particular, $a_{g}a_{0}\neq 0$ and $a_{g}a_{g_{i}}\neq 0$, for all $1\leq i\leq k$. If $A_{g}\subseteq Z(A)$, then $a_{g}a_{g_{i}}=a_{g_{i}}a_{g}$, for all $1\leq i\leq k$, yielding $\beta(g,h)=1$, for all $h\in G$ which is an absurd by the minimality of $A$. Hence, $A_{g}$ is not contained in $Z(A)$. Analogously one can see that $A_{-g}$ is not contained in $Z(A)$. Now, consider the monomials $f:=x_{1}^{(g)}$ and $\widetilde{f}:=x_{2}^{(-g)}$. Since $A_{g}$ and $A_{-g}$ are not contained in $Z(A)$, it follows that $f$, $\widetilde{f}\notin \mathscr{X}_{A,\Gamma}.$ But $f\widetilde{f}\in \mathscr{X}_{A,\Gamma}$. 
\end{proof}
\begin{Corollary} 
\label{Pauli.grading}
For every $n\geq 1$, the algebra $M_{n}(K)$ endowed with Pauli's grading does not satisfy the primeness property for graded central polynomials. 
\end{Corollary}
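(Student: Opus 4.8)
The plan is to deduce the statement immediately from the preceding Theorem, so that the only thing requiring verification is that $M_n(K)$ equipped with the Pauli grading is a $G$-graded regular algebra, where $G=\mathbb{Z}_n\times\mathbb{Z}_n$. First I would recall the construction: since $K$ is algebraically closed of characteristic $0$, fix a primitive $n$-th root of unity $\omega\in K$ and take the clock matrix $X_a=\operatorname{diag}(1,\omega,\ldots,\omega^{n-1})$ together with the cyclic shift matrix $X_b$, normalized so that $X_aX_b=\omega X_bX_a$. The $n^2$ matrices $X_a^iX_b^j$ with $0\le i,j\le n-1$ form a $K$-basis of $M_n(K)$, and setting $(M_n(K))_{(i,j)}:=\operatorname{span}_K\{X_a^iX_b^j\}$ yields the Pauli $(\mathbb{Z}_n\times\mathbb{Z}_n)$-grading; this grading is fine, every homogeneous component being one-dimensional.

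Next I would verify condition (ii) of Definition \ref{def.regular.gradings}. From $X_aX_b=\omega X_bX_a$ one gets $X_bX_a=\omega^{-1}X_aX_b$, and hence, by moving powers of $X_a$ past powers of $X_b$, the identity $X_b^jX_a^k=\omega^{-jk}X_a^kX_b^j$. This gives
\[
(X_a^iX_b^j)(X_a^kX_b^l)=\omega^{-jk}\,X_a^{i+k}X_b^{j+l},\qquad (X_a^kX_b^l)(X_a^iX_b^j)=\omega^{-li}\,X_a^{i+k}X_b^{j+l}.
\]
Comparing the two products yields the quasi-commutation relation with scalar $\beta\big((i,j),(k,l)\big)=\omega^{\,li-jk}\in K^*$, which depends only on the homogeneous degrees (automatically, since each component is one-dimensional). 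A routine check confirms that this $\beta$ is a bicharacter, as required.

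Then I would verify condition (i). Each basis matrix $X_a^iX_b^j$ is a product of invertible matrices, hence invertible; consequently, for any tuple of degrees, the product $a_1\cdots a_m$ of nonzero homogeneous elements (each a nonzero scalar multiple of some $X_a^iX_b^j$) is a nonzero scalar multiple of an invertible matrix, and therefore nonzero. This settles condition (i), so $M_n(K)$ with the Pauli grading is a $(\mathbb{Z}_n\times\mathbb{Z}_n)$-graded regular algebra, and the conclusion follows at once by applying the Theorem.

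I would expect no genuine obstacle: the entire content is the elementary bicharacter computation establishing regularity, after which the corollary is a direct instance of the Theorem. The only point requiring care is keeping the signs and the ordering in the exponent of $\omega$ correct when commuting $X_a$ past $X_b$, and confirming that the resulting $\beta$ takes values in $K^*$ and is independent of the representatives chosen within each homogeneous component—the latter being immediate from one-dimensionality.
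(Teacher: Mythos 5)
Your proposal is correct and is exactly the route the paper intends: the corollary is stated without proof as an immediate consequence of the preceding Theorem, the only content being that the Pauli grading on $M_n(K)$ is a regular $(\mathbb{Z}_n\times\mathbb{Z}_n)$-grading, which your bicharacter computation $\beta\bigl((i,j),(k,l)\bigr)=\omega^{\,li-jk}$ and the invertibility of the homogeneous basis elements establish correctly. The only caveat (a defect of the paper's statement rather than of your argument) is the case $n=1$, where the grading group is trivial and the Theorem's proof, which picks a nonzero $g\in G$, does not apply; the result is really about $n\geq 2$.
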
 
\begin{Corollary}
\label{aux.lemma.1}
Let $G$ be a finite abelian group, $Q$ a non-trivial subgroup of $G$, $\alpha\in H^{2}(Q,K^{\ast})$ a cocycle such that the bicharacter induced by $\alpha$ is non-trivial, $r> 1$, and $(g_{1},\ldots, g_{r})\in G^{r}$ an $r$-tuple. Consider $(K^{\alpha}Q\otimes M_{r}(K),\Gamma)$,  where the grading $\Gamma$ is defined by $(K^{\alpha}Q\otimes M_{r}(K))_{g}=\text{span}_{K}\{X_{h}\otimes e_{i,j}\mid g=-g_{j}+h+g_{i}\}$, $g\in G$, and $\{X_{q}\mid q\in Q\}$ denotes the canonical basis of $K^{\alpha}Q$ as well as $e_{i,j}$ is the $(i,j)$-elementary matrix. Then, $(K^{\alpha}Q\otimes M_{r}(K),\Gamma)$ does not satisfy the primeness property for graded central polynomials.
\end{Corollary}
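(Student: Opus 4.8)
The plan is to violate the primeness property directly, by producing graded polynomials $f$ and $\widetilde f$ in disjoint variables with $f\widetilde f\in\mathscr X_{A,\Gamma}$ but $f,\widetilde f\notin\mathscr C_{A,\Gamma}$. The guiding observation is the following: since the bicharacter $\beta$ induced by $\alpha$ is non-trivial, there is $q_0\in Q$ with $\beta(q_0,q_1)\neq1$ for some $q_1\in Q$, so $X_{q_0}$ is not central in $K^{\alpha}Q$; as $Z(K^{\alpha}Q\otimes M_r(K))=Z(K^{\alpha}Q)\otimes KI_r$, the homogeneous element $X_{q_0}\otimes I_r$ (of degree $q_0$) is not central in $A$. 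On the other hand $(X_{q_0}\otimes I_r)(X_{-q_0}\otimes I_r)=\alpha(q_0,-q_0)\,1_A\in Z(A)$. Hence it suffices to manufacture $f$ whose values are all forced onto the single non-central line $K(X_{q_0}\otimes I_r)$, and $\widetilde f$ whose values lie on $K(X_{-q_0}\otimes I_r)$: each is then non-central, yet the product of any two such values is a scalar, hence central.

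To realize this I would take a multilinear central polynomial $c(z_1,\dots,z_m)$ of $M_r(K)$ — whose role is to scalarize the matrix factor — and lift it to a graded polynomial of $A$ by giving each $z_j$ a homogeneous degree, shifting the degree of exactly one variable by $q_0$ and choosing the remaining degrees inside the elementary part. Because $c$ is multilinear, each of its monomials contains the shifted variable exactly once, so the $K^{\alpha}Q$-factor accumulated along every monomial is the same element $X_{q_0}$ (the other factors being $X_0=1$); this factor pulls out of the tensor and leaves $c$ acting on the matrix factors. Consequently $f:=c(\dots)$ evaluates to $\lambda(X_{q_0}\otimes I_r)$, with $\lambda\neq0$ for generic substitutions since $c$ is not an identity. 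Defining $\widetilde f$ by the same recipe with $q_0$ replaced by $-q_0$ and disjoint variables gives $\widetilde f=\lambda'(X_{-q_0}\otimes I_r)$, whence $f\widetilde f=\lambda\lambda'\alpha(q_0,-q_0)\,1_A$. Thus $f\widetilde f$ is a proper graded central polynomial while $f,\widetilde f$ are not central, and primeness fails. Note that shifting a single variable makes the argument independent of the number of variables of $c$, so \emph{any} multilinear central polynomial of $M_r(K)$ will do.

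The step I expect to be the main obstacle is justifying that $X_{q_0}$ really does pull out for \emph{every} admissible homogeneous substitution, not merely for the evident ones. This is transparent exactly when each component in play factors as a genuine tensor $A_{\delta+q}=X_{q}\otimes (M_r(K))_{\delta}$, which happens when the fine group $Q$ and the elementary subgroup $S=\langle g_i-g_j\mid i,j\rangle$ intersect trivially. In general the components are twisted and only block diagonal with respect to the partition of $\{1,\dots,r\}$ into cosets of $Q$. I would reduce to the transparent case in two steps: first absorb the $Q$-parts of the shifts $g_i$ by conjugating with a diagonal matrix whose entries are invertible elements $X_{\gamma_i}\in K^{\alpha}Q$ — a degree-preserving inner automorphism — so that the distinct values among the $g_i$ may be assumed to lie in distinct cosets of $Q$; and then choose the elementary degrees $\delta_j$ of the variables $z_j$ to run through cross-coset homogeneous matrix units, so that the matrix factors still range over enough of $M_r(K)$ for $c$ to output $I_r$. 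Controlling the residual twisting that survives when coset-differences in $G/Q$ coincide is the genuinely technical point, and is where the bulk of the computation lies.
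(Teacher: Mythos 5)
Your proposal is not a complete proof, and you identify the missing piece yourself: the claim that \emph{every} admissible homogeneous substitution into $f$ lands on the line $K(X_{q_0}\otimes I_{r})$ is exactly the point on which the whole construction rests, and neither the conjugation by $\operatorname{diag}(X_{\gamma_1},\ldots,X_{\gamma_r})$ (which only lets you assume $g_i-g_j\in Q$ implies $g_i=g_j$) nor the choice of elementary degrees for the remaining variables disposes of the case where $g_i-g_j$ and $g_k-g_l$ coincide, or coincide modulo $Q$, for distinct pairs $(i,j)\neq(k,l)$. In that case the component $A_{\delta_1+q_0}$ genuinely mixes several basis elements $X_h\otimes e_{i,j}$, the $K^{\alpha}Q$-factor does not pull out of the monomials of $c$, and you cannot conclude that $f\widetilde f\in\mathscr{C}_{A,\Gamma}$. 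Since you explicitly defer ``the bulk of the computation'' to this step, the argument as written has a genuine gap.

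For comparison, the paper's proof is drastically shorter: it takes $f=x_{1}^{(q)}$ and $\widetilde f=\alpha(q,-q)^{-1}x_{2}^{(-q)}$ for a single $q\in Q$ with $\beta_{\alpha}(q,q')\neq 1$, notes that $X_{q}\otimes I_{r}$ is homogeneous of degree $q$ and non-central, and that $f\widetilde f$ evaluated at $(X_{q}\otimes I_{r},X_{-q}\otimes I_{r})$ gives $X_{0}\otimes I_{r}$; no central polynomial of $M_r(K)$ and no normalization of the tuple appear. Your first paragraph already contains all of this. However, you should be aware that the concern driving your extra machinery is legitimate: membership of $f\widetilde f$ in $\mathscr{X}_{A,\Gamma}$ requires centrality of $ab$ for \emph{all} $a\in A_{q}$, $b\in A_{-q}$, i.e.\ $A_{q}A_{-q}\subseteq Z(A)$, and the paper verifies only the single evaluation above. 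When the tuple $(g_1,\ldots,g_r)$ has repeated entries modulo $Q$, the component $A_q$ is $X_q\otimes\operatorname{span}\{e_{i,j}\mid g_i\equiv g_j \bmod Q\}$ and $A_qA_{-q}$ contains non-scalar matrices, so the one-evaluation check does not suffice either. In short: the paper's route is the right level of economy but skips the verification; your route takes the verification seriously but does not carry it out. A complete proof needs to close exactly that step, for whichever pair $(f,\widetilde f)$ is used.
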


\begin{proof}  Let $\beta_{\alpha}$ be the bicharacter induced by $\alpha$, $\beta_{\alpha}(g,h)=\alpha(g,h)\alpha^{-1}(h,g)$. Since $\beta_{\alpha}$ is non-trivial, there exist $q\in Q$ and $q'\in Q$ such that  $\beta_{\alpha}(q,q')\neq 1$.  Consider the following polynomials in $K\langle X_{G}\rangle$
\[
f(x_{1}^{(q)})=x_{1}^{(q)}, \quad \widetilde{f}(x_{2}^{(-q)})=\alpha(q'-q)^{-1}x_{2}^{(-q)}, \text{ and } h(x_{1}^{(q)},x_{2}^{(-q)})=f(x_{1}^{(q)})\widetilde{f}(x_{2}^{(-q)}).
\]
 If $I_{r}$ is the identity matrix in $M_{r}(K)$, then  $X_{q}\otimes I_{r}\notin Z( K^{\alpha}Q\otimes M_{r}(K))$, because
 \[
 (X_{q}\otimes I_{r})(X_{q'}\otimes I_{r})=\beta_{\alpha}(q,q')(X_{q'}\otimes I_{r})(X_{q}\otimes I_{r})
 \]
 and $\beta_{\alpha}(q,q')\neq 1$. 
 On the other hand,
\[
h(X_{q}\otimes I_{r}, X_{-q}\otimes I_{r})=X_{0}\otimes I_{r}\in Z(K^{\alpha}Q\otimes M_{r}(K)).
\]
 This implies that $K^{\alpha}Q\otimes M_{r}(K)$ has not the primeness property for graded central polynomials. 
\end{proof}

We recall the next result from \cite{DinizFidelis} regarding the primeness property for graded central polynomials. 

\begin{Theorem}
\label{Diniz.Claudemir.Theorem}
\cite[Theorem 3.10]{DinizFidelis} Let $M_{n}(K)$ be endowed with an elementary grading given by an $n$-tuple of pairwise distinct elements of $G$. Then $M_n(K)$ has the primeness property for graded central polynomials if and only if the grading is a crossed product grading and the group $G$ has no non-trivial homomorphism $G\rightarrow K^{\ast}$.
\end{Theorem}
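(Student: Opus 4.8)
The plan is to read off the graded central polynomials from the way the elementary grading acts on the diagonal, reduce everything to the neutral (identity) component, and then treat the two implications separately. For any elementary grading by pairwise distinct $g_{1},\dots,g_{n}$ the neutral component $C_{e}$ is exactly the algebra of diagonal matrices, since $e_{i,j}$ is homogeneous of neutral degree only when $i=j$. The first reduction is that a \emph{proper} graded central polynomial must have total degree $e$: a homogeneous product landing in $C_{g}$ with $g\neq e$ has zero diagonal, hence it is central only if it vanishes identically. Thus a proper graded central polynomial takes its nonzero values in $C_{e}$, and the question becomes deciding when those diagonal values are forced to be scalar for every homogeneous substitution. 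As usual in characteristic $0$ it suffices (by the linearization argument of \cite{diniz2015primeness, DinizFidelis}) to work with multilinear $f$, which I analyze through the subspace $W_{f}\subseteq C_{e}$ spanned by its homogeneous evaluations: $f$ is central iff $W_{f}\subseteq KI$, and primeness fails precisely when there are disjoint $f$, $g$ with $W_{f}W_{g}\subseteq KI$, $W_{f}W_{g}\neq 0$, while $W_{f}\not\subseteq KI$ or $W_{g}\not\subseteq KI$.

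For the ``if'' direction assume the grading is a crossed product grading, so $\{g_{1},\dots,g_{n}\}$ is a coset of a subgroup $Q$ of order $n$. Indexing rows and columns by $Q$, each component carries an invertible homogeneous element $P_{h}=\sum_{q\in Q}e_{q,qh}$ with $P_{h}P_{h'}=P_{hh'}$, and conjugation by the $P_{h}$ realizes the regular action of $Q$ on $C_{e}\cong\operatorname{Fun}(Q,K)$. The plan is to show that, modulo graded identities, the only reachable $W_{f}$ that can be paired to scalars are themselves scalar, so that every multilinear graded central polynomial descends from an ordinary central polynomial of $M_{n}(K)$; a graded central factorization then lifts to an ungraded one and Regev's ungraded primeness \cite{regev1979primeness} forces the factors central. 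The hypothesis that $G$ has no non-trivial homomorphism to $K^{*}$ enters through the computation that a one-dimensional diagonal subspace $KD$ is invariant under conjugation by all $P_{h}$ if and only if $D$ is a scalar multiple of $D_{\chi}:=\sum_{q}\chi(q)e_{q,q}$ for some homomorphism $\chi\colon Q\to K^{*}$ (indeed $P_{h}D_{\chi}=\chi(h)D_{\chi}P_{h}$); perfectness of $Q$ forces $\chi$ trivial, hence $D$ scalar, so there is no non-central conjugation-stable line to pair against itself.

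For the ``only if'' direction I argue by contraposition, exhibiting in each failure mode disjoint $f$, $g$ with $fg$ proper graded central while $f\notin\mathscr{X}_{A,\Gamma}$. If $G$ (equivalently $Q$) has a non-trivial homomorphism $\chi\colon Q\to K^{*}$, then $D_{\chi}$ is a non-scalar diagonal with $D_{\chi}D_{\chi^{-1}}=I$; the task is to produce a multilinear $p_{\chi}$ whose homogeneous values are confined to the line $KD_{\chi}$ (for $Q=\mathbb{Z}_{2}$ one takes $p_{\chi}=[x_{1}^{(1)},x_{2}^{(1)}]$, whose value is a multiple of $\operatorname{diag}(1,-1)$), and then to set $f=p_{\chi}$, $g=p_{\chi^{-1}}$ in disjoint variables, so that $fg$ lands in $KI$ and is proper while neither factor is central. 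If instead the grading is not a crossed product grading, then some support component $C_{h}$ has $\dim C_{h}<n$ and hence contains no invertible element; the prototype here is that when $\dim C_{h}=1$ the commutator $[x^{(h)},y^{(h^{-1})}]$ takes values in the line $K(e_{i,i}-e_{j,j})$, which is square-scalar when $n=2$, and the plan is to assemble from such ingredients a reachable non-scalar diagonal $D$ with $D^{2}\in KI$, again breaking primeness.

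The main obstacle is the same on both sides of the equivalence: controlling which diagonal subspaces are \emph{reachable} as $W_{f}$ for graded multilinear $f$, and which of these are square-pairable. For the ``if'' direction one must upgrade the statement about conjugation-stable lines to a statement about all reachable $W_{f}$, proving that perfectness of $Q$ leaves no room beyond the scalar (ungraded) central polynomials; this is the representation-theoretic heart, governed by the decomposition of $\operatorname{Fun}(Q,K)$ under the regular action. For the ``only if'' direction in the non-crossed-product case the delicate point is to guarantee that the assembled polynomial is central for \emph{all} homogeneous substitutions, not merely on the distinguished permutation and diagonal elements, and to do so uniformly for arbitrary $n$ (where the simple $e_{i,i}-e_{j,j}$ construction no longer has scalar square) and arbitrary non-regular supports. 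This reachability analysis is exactly where the argument is genuinely harder than in the regular (fine-graded) situation handled earlier, in which every component is one-dimensional and the pairing is immediate.
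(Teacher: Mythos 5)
First, a point of comparison: the paper does not prove this statement at all. It is imported verbatim, with attribution, as \cite[Theorem 3.10]{DinizFidelis}, so there is no in-paper argument to measure yours against; the relevant proof lives in Diniz--Fidelis. Judged on its own terms, your proposal is an outline whose decisive steps are missing, and you say as much yourself in the final paragraph. The parts that are actually established are the easy ones: for pairwise distinct $g_{1},\dots,g_{n}$ the neutral component is the diagonal; a multilinear polynomial of non-neutral total degree takes values with zero diagonal and hence cannot be a proper graded central polynomial; and for $Q=\mathbb{Z}_{2}$ the product $[x_{1}^{(1)},x_{2}^{(1)}]\,[x_{3}^{(1)},x_{4}^{(1)}]$ does break primeness. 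Everything that carries the theorem is deferred.

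Concretely, in the ``if'' direction the entire content is the claim that any reachable evaluation space $W_{f}$ which pairs into $KI$ against a nonzero reachable $W_{g}$ must itself lie in $KI$. What you actually verify is only that a conjugation-stable \emph{line} of diagonal matrices is spanned by some $D_{\chi}$; nothing in the sketch forces $W_{f}$ to be one-dimensional or stable under conjugation by the $P_{h}$, and the asserted reduction that every multilinear graded central polynomial ``descends from an ordinary central polynomial'' (so that Regev's ungraded primeness \cite{regev1979primeness} can be invoked) is essentially the statement to be proved. There is also a real issue in the lifting step: graded centrality constrains only homogeneous substitutions, ordinary centrality constrains all substitutions, and the passage between the two is exactly where the structure of the crossed product grading (invertible homogeneous elements, generic graded elements) must be used. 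In the ``only if'' direction, the polynomial $p_{\chi}$ with values confined to $KD_{\chi}$ is produced only for $Q=\mathbb{Z}_{2}$, and the non-crossed-product branch rests on the unproven assertion that one can assemble a reachable non-scalar diagonal $D$ with $D^{2}\in KI$ --- which, as you note, already fails for the naive candidate $e_{i,i}-e_{j,j}$ when $n>2$. (A smaller slip: for nonabelian $Q$ --- the only case in which the no-nontrivial-character hypothesis is not simply ``$G$ is trivial'' --- your $P_{h}=\sum_{q}e_{q,qh}$ is not homogeneous for the convention $\deg e_{i,j}=g_{j}g_{i}^{-1}$; you need $e_{q,hq}$.) So the proposal identifies the right battleground but does not fight the battle; completing it would require reproducing the analysis of \cite{DinizFidelis} rather than the heuristic bookkeeping of ``reachable subspaces.''
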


Recall that an elementary grading by an $n$-tuple of pairwise distinct elements by a group $G$ of order $n$ is called a \textsl{crossed product grading} (see \cite{aljadeff2013crossed}).

Observe that the previous result only concerns elementary gradings defined by an $n$-tuple of pairwise distinct elements. But what about the case when the elementary grading is induced by an $n$-tuple where some of the elements may repeat? To shed light on such an issue, we study the general case in the next results.


Let $M_{n}(K)$ be endowed with a grading $\Gamma$, given by the finite abelian group $G$. By \cite[Theorem 3]{bahturin2005finite}, $(M_{n}(K),\Gamma)$ is graded isomorphic to $(K^{\alpha}Q\otimes M_{r}(K),\Gamma')$, where $r\geq 1$, $Q$ is a non-trivial subgroup of $G$, $r=(g_{1},\ldots,g_{r})\in G^{r}$,  $\alpha\in H^{2}(Q,K^{\ast})$, and  $\Gamma'$ is defined by $(K^{\alpha}Q\otimes M_{r}(K))_{g}=\text{span}_{K}\{X_{h}\otimes e_{i,j}\mid g=-g_{j}+h+g_{i}\}$, $g\in G$. 

\begin{Theorem} 
\label{elementary}
Let $(M_{n}(K),\Gamma)$ be a matrix algebra over an algebraically closed field $K$ of characteristic zero, endowed with a grading $\Gamma$ by a group $G$. Under these conditions, $(M_n(K),\Gamma)$ is isomorphic to the tensor product $(K^{\alpha}Q \otimes M_r(K), \Gamma')$, where $M_r(K)$ carries an elementary grading and $K^{\alpha}Q$ denotes the twisted group algebra determined by the cocycle $\alpha$.
    \begin{enumerate}
        \item[$(i)$] 
        Suppose the bicharacter induced by $\alpha$ is non-trivial. If $(M_{n}(K),\Gamma)$ satisfies the primeness property for graded central polynomials, then $\Gamma$ is an elementary grading.
        \item [$(ii)$] 
        Suppose the bicharacter induced by $\alpha$ is trivial. Then $(M_{n}(K),\Gamma)$ satisfies the primeness property for graded central polynomials if and only if $M_{r}(K)$ satisfies it, endowed with its elementary grading induced by $r$.
    \end{enumerate}
\end{Theorem}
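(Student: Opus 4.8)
The plan is to exploit the tensor decomposition $(M_n(K),\Gamma)\cong (K^{\alpha}Q\otimes M_r(K),\Gamma')$ and to analyse the fine factor $K^{\alpha}Q$, feeding into the three non-primeness results already available: the theorem that a $G$-graded regular algebra never satisfies the primeness property, together with Corollary \ref{Pauli.grading} and Corollary \ref{aux.lemma.1}. The two cases of the statement are distinguished by whether $\beta_{\alpha}$ is non-trivial (so that $K^{\alpha}Q$ is non-commutative and contributes a genuine fine part) or trivial (so that the fine factor collapses).

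For $(i)$, I would first record that a non-trivial $\beta_{\alpha}$ forces $Q\neq\{0\}$ and makes $K^{\alpha}Q$ non-commutative; consequently the matrix units of $M_n(K)\cong K^{\alpha}Q\otimes M_r(K)$ are not homogeneous, so $\Gamma$ is \emph{not} elementary. Since the conclusion of $(i)$ thus cannot hold under its own running hypothesis, establishing $(i)$ reduces to showing that $(M_n(K),\Gamma)$ cannot satisfy the primeness property at all. I would split into two cases. If $r>1$, the data $(Q,\alpha,r)$ meet exactly the hypotheses of Corollary \ref{aux.lemma.1}, which yields the failure of the primeness property. If $r=1$, then $M_n(K)\cong K^{\alpha}Q$ carries its fine $Q$-grading: each component $KX_q$ is one-dimensional, every product $X_{q_1}\cdots X_{q_m}$ is a nonzero scalar multiple of $X_{q_1+\cdots+q_m}$, and $X_qX_{q'}=\beta_{\alpha}(q,q')X_{q'}X_q$, so the grading is regular; by the theorem asserting that a $G$-graded regular algebra never satisfies the primeness property, primeness fails here as well. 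As both cases are incompatible with primeness, $(i)$ follows.

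For $(ii)$, I would show that a trivial $\beta_{\alpha}$ makes the fine factor disappear. From $X_qX_{q'}=\beta_{\alpha}(q,q')X_{q'}X_q=X_{q'}X_q$ we get that $K^{\alpha}Q$ is commutative; since $M_r(K)$ is central simple and $K^{\alpha}Q\otimes M_r(K)\cong M_n(K)$ is simple, the factor $K^{\alpha}Q$ is a finite-dimensional commutative simple algebra over the algebraically closed field $K$, hence $K^{\alpha}Q=K$ and $Q=\{0\}$. The graded isomorphism then identifies $(M_n(K),\Gamma)$ with $(M_r(K),\Gamma')$, where $\Gamma'$ is exactly the elementary grading determined by $r=(g_1,\ldots,g_r)$ (and $n=r$). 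Therefore $(M_n(K),\Gamma)$ satisfies the primeness property if and only if $M_r(K)$ does with its elementary grading, the two graded algebras being the same.

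I expect the delicate points to be structural rather than computational. The first is verifying, in the case $r=1$, that the fine grading genuinely satisfies both regularity axioms, so that the regular-grading theorem applies. The second is confirming that a non-trivial $\beta_{\alpha}$ is truly incompatible with elementariness, i.e.\ that $\beta_{\alpha}$ detects precisely the non-commutativity of $K^{\alpha}Q$; this is what makes $(i)$ a bona fide reduction and the simplicity argument in $(ii)$ valid. Both hinge on the elementary structure theory of the twisted group algebra $K^{\alpha}Q$ over $K$.
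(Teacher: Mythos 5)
Your proposal is correct and follows essentially the same route as the paper: Corollary \ref{aux.lemma.1} handles the case $r>1$, the regularity of the fine grading (i.e.\ Corollary \ref{Pauli.grading}) handles $r=1$, and in case $(ii)$ the commutative fine factor $K^{\alpha}Q$ collapses to $K$, reducing everything to the elementary grading on $M_r(K)$. The only cosmetic difference is that you deduce $K^{\alpha}Q=K$ from simplicity of the tensor factor over the algebraically closed field, whereas the paper observes that $K^{\alpha}Q\cong KQ$ sits inside $Z(M_n(K))=K$; the two arguments are equivalent.
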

\begin{proof} For $(i)$, suppose  $(M_{n}(K),\Gamma)$ has the primeness property for graded central polynomials, and the bicharacter induced by $\alpha$ is non-trivial. 

If $r=1$, then $(M_{n}(K),\Gamma)$ is the Pauli's grading but this is not allowed according to Corollary \ref{Pauli.grading}. On the other hand, if $Q\neq \{0\}$, then by Corollary \ref{aux.lemma.1} $(M_{n}(K),\Gamma)$ does not satisfy the primeness property for graded central polynomials. We get $Q=\{0\}$, $r=n$ and $\Gamma$ is an elementary grading of $M_{n}(K)$.

Let us prove $(ii)$. Suppose the bicharacter induced by $\alpha$ is trivial. Then, $K^{\alpha}Q$ is commutative and by \cite[Corollary 2.2.5]{karpilovsky1993group}, $K^{\alpha}Q$ is isomorphic to the group algebra $KQ$. Then \( KQ \simeq KQ \cdot I_n \), where \( I_n \) denotes the identity matrix in \( M_n(K) \). Hence, we may 
assume that $KQ \subseteq Z(M_n(K))=K$. Hence $KQ=K$, and $Q=\{0\}$. In this way, \( \Gamma \) comes from an elementary grading induced by the tuple \( r \). Therefore, \( (M_{n}(K), \Gamma) \) has the primeness property if and only if \( M_{r}(K) \) has the primeness property for graded central polynomials.
\end{proof}

\section{\texorpdfstring{Graded central polynomials in $M_{2}(K)$ and $M_{3}(K)$ }{Primeness property for graded central polynomials in }}

In this section we study the proper central primeness in the case of matrix algebras of order 2 and 3, endowed with an elementary grading. 

Throughout, we fix a finite abelian group $G$. We consider $R:=M_{3}(K)$ endowed with the elementary grading $\Gamma$ given by the triple $(g_{1},g_{2},g_{3})$, where $g_{1}=g_{2}=g$ and $g_{3}\neq g$.  We know that
	\[
	R_{0}=\operatorname{span}_{K}\{e_{1,1},e_{2,2},e_{3,3},e_{1,2},e_{2,1}\},\quad R_{g_{3}-g}=\operatorname{span}_{K}\{e_{1,3},e_{2,3}\}
	\]
	and
	\[
	R_{g-g_{3}}=\operatorname{span}_{K}\{e_{3,1},e_{3,2}\}.
	\]

First we prove that there is no multilinear polynomial of degree $3$ in $\mathscr{X}_{R,\Gamma}$. Indeed, suppose there exists a multilinear polynomial $f\in \mathscr{X}_{R,\Gamma}$ such that $\deg_{\mathbb{Z}}f=3$. Therefore, we can write
		\[
		f(x_{1}^{(h_{1})},x_{2}^{(h_2)},x_{3}^{(h_{3})})=\sum_{\sigma\in S_{3}}\gamma_{\sigma}x_{\sigma(1)}^{(h_{\sigma(1)})}x_{\sigma(2)}^{(h_{\sigma(2)})}x_{\sigma(3)}^{(h_{\sigma(3)})},
		\]
		where $(h_{1},h_{2},h_{3})\in (\operatorname{Supp}(R))^{3}$. Denote by $I_{3}$ the identity matrix in $R$; since $f\in \mathscr{X}_{R,\Gamma}$ and $f$ is multilinear, we have  $f(X_{1},X_{2},X_{3})=cI_{3}$, with $c\neq 0$ if $X_{1}:=e_{s_{1},s_{2}}\in R_{h_{1}}$, $X_{2}:=e_{s_{3},s_{4}}\in R_{h_{2}}$ and $X_{3}:=e_{s_{5},s_{6}}$. Given $\sigma\in S_{3}$, set $M_{\sigma}:=x_{\sigma(1)}^{(h_{\sigma(1)})}x_{\sigma(2)}^{(h_{\sigma(2)})}x_{\sigma(3)}^{(h_{\sigma(3)})}$. Up to a permutation, we assume $M_{1}(X_{1},X_{2},X_{3})\neq 0$. Now, $X_{1}X_{2}X_{3}=e_{s_{1},s_{2}}e_{s_{3},s_{4}}e_{s_{5},s_{6}}\neq 0$, if and only if $s_{2}=s_{3}=s$ and $s_{4}=s_{5}=r$. Because $f(X_{1},X_{2},X_{3})$ is a diagonal matrix, we get $s_{1}=s_{6}=t$. Observe the following table:
		
        \begin{table}[h] 
			\centering      
			\begin{tabular}{|c|} 
				\hline
				$M_{1}(X_{1},X_{2},X_{3})=e_{t,t} $       \\
				$M_{(1,2,3)}(X_{1},X_{2},X_{3})=e_{s,s} $           \\
				$M_{(1,3,2)}(X_{1},X_{2},X_{3})=e_{r,r} $ \\
				$M_{(1,2)}(X_{1},X_{2},X_{3})=\delta_{r,t}\delta_{s,r}e_{s,t}$ \\
				$M_{(1,3)}(X_{1},X_{2},X_{3})=\delta_{t,s}\delta_{r,t}e_{r,s}$\\
				$M_{(2,3)}(X_{1},X_{2},X_{3})=\delta_{s,r}\delta_{t,s} e_{t,r}$\\
				\hline
			\end{tabular}
			\label{tab:duas-colunas}
		\end{table}
        \noindent
        where $\delta_{i,j}$ denotes the Kronecker delta.
		By hypothesis, if $\mathcal{T}_{3}\subseteq S_{3}$ denotes the set of all transpositions in $S_{3}$, we have
        \[
        \sum_{\sigma\in \mathcal{T}_{3}} \gamma_{\sigma} M_{\sigma}(X_{1},X_{2},X_{3})=0.
        \]
         Let $\{t,s,r\}= \{1,2,3\}$ and suppose $s=3$ (the remaining cases are similar). Then, we have $X_{1}=e_{t,3}$, $X_{2}=e_{3,r}$. In this case, since not all $h_{i}$ are zero, $h_{1}+h_{2}+h_{3}=0$, and the only element in $R_{0}$ that has the form $e_{\ast,3}$ or $e_{3,\ast}$ is $e_{3,3}$, we conclude $X_{1}\in R_{g_{3}-g}$, $X_{2}\in R_{g-g_{3}}$ and $X_{3}\in R_{0}$. Furthermore, $\{t,r\}=\{1,2\}$. Without loss of generality we assume $t=1$ and $r=2$. As a result we have
		\[
		f(X_{1},X_{2},X_{3})=\gamma_{1}e_{1,1}+\gamma_{(1,2,3)}e_{3,3}+\gamma_{(1,3,2)}e_{2,2}=cI_{3},
		\]
		which implies $\gamma_{1}=\gamma_{(1,2,3)}=\gamma_{(1,3,2)}=c$.
		
		Now take $Z_{1}:=e_{1,3}$, $Z_{2}=e_{3,1}$ and $Z_{3}:=e_{1,1}$. We have 
	\[
	f(Z_{1},Z_{2},Z_{3})=(\gamma_{1}+\gamma_{(1,3,2)})e_{1,1}+\gamma_{(1,2,3)}e_{3,3}=2ce_{1,1}+ce_{3,3}\neq 0
	\]
	which is a contradiction because it is not central. 

	\begin{Remark} If $a$, $b\in R_{h}$, with $h\in \{g-g_{3},g_{3}-g\}$, then by definition $ab=0$. In particular, if $x_{1}^{(h_{1})}\cdots x_{n}^{(h_{n})}\in\mathscr{X}_{R,\Gamma}$, with $h_{i}\neq 0$, for all $1\leq i\leq n$, we conclude
		\[
		(h_{1},\ldots, h_{n})\in \{(g-g_{3},g_{3}-g\ldots, g_{3}-g), (g_{3}-g,g-g_{3},\ldots, g_{3}-g)\}.
		\]
		If $(h_{1},\ldots, h_{n})=(g_{3}-g,g-g_{3},\ldots, g_{3}-g)$, then we would have
		\[
		h_{1}+\cdots+h_{n}=(g_{3}-g)+(g-g_{3})+\cdots+(g_{3}-g)+(g-g_{3})+(g_{3}-g)=g_{3}-g
		\]
		that is a contradiction because $g\neq g_{3}$. Thus $(h_{1},\ldots, h_{n})=(g-g_{3},g_{3}-g\ldots, g_{3}-g)$. 
\end{Remark}

\begin{Lemma} 
\label{conjugation} Let $S\in R_{0}$ be a diagonalizable matrix. Then, there exists an invertible matrix $E\in R_{0}$ such that $ESE^{-1}=D$, where $D$ is a diagonal matrix. In particular, for every $h\in \operatorname{Supp}(R)$ and invertible $Y\in R_{h}$, it follows that $EYE^{-1}\in R_{h}$.
\end{Lemma}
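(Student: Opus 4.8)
The plan is to reduce everything to the block structure of $R_{0}$. From the description of the grading, $R_{0}=\operatorname{span}_{K}\{e_{1,1},e_{2,2},e_{3,3},e_{1,2},e_{2,1}\}$ consists precisely of the block-diagonal matrices $\begin{pmatrix} B & 0 \\ 0 & c\end{pmatrix}$ with $B\in M_{2}(K)$ and $c\in K$; thus $R_{0}\cong M_{2}(K)\times K$ as an algebra, and such an element is invertible in $R$ exactly when $B\in GL_{2}(K)$ and $c\neq 0$, in which case its inverse has the same block shape.

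First I would produce the diagonalizing conjugator inside $R_{0}$. Writing $S=\begin{pmatrix} S' & 0 \\ 0 & e\end{pmatrix}\in R_{0}$ with $S'\in M_{2}(K)$ and $e\in K$, I note that the minimal polynomial of $S'$ divides that of $S$; since $S$ is diagonalizable its minimal polynomial has simple roots, hence so does that of $S'$, and as $K$ is algebraically closed $S'$ is diagonalizable over $K$. Choosing $P\in GL_{2}(K)$ with $PS'P^{-1}$ diagonal and setting $E=\begin{pmatrix} P & 0 \\ 0 & 1\end{pmatrix}$, I obtain an invertible $E\in R_{0}$ with $ESE^{-1}=\begin{pmatrix} PS'P^{-1} & 0 \\ 0 & e\end{pmatrix}$ diagonal, as required.

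For the final assertion, the point I would stress is that $E^{-1}$ again lies in $R_{0}$: this is transparent from the block form, where $E^{-1}=\begin{pmatrix} P^{-1} & 0 \\ 0 & 1\end{pmatrix}$, but it also reflects the general principle that a degree-zero element invertible in $R$ has its inverse in $R_{0}$, obtained by comparing the homogeneous components of $EE^{-1}=1\in R_{0}$. Granting this, conjugation by $E$ is a $G$-graded automorphism of $R$: for any $Y\in R_{h}$ one has $EYE^{-1}\in R_{0}R_{h}R_{0}\subseteq R_{0+h+0}=R_{h}$. In particular $EYE^{-1}\in R_{h}$ for every (invertible) $Y\in R_{h}$ and every $h\in\operatorname{Supp}(R)$. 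I do not expect a genuine obstacle in this lemma; the only step deserving explicit care is the bookkeeping that forces $E^{-1}\in R_{0}$, after which the grading preservation is a formal consequence of the inclusion $R_{g}R_{h}\subseteq R_{g+h}$.
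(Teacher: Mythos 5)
Your proposal is correct and follows essentially the same route as the paper: the same block decomposition of $R_{0}$ as $M_{2}(K)\times K$, diagonalizing the $2\times 2$ block via the fact that the minimal polynomial of $S'$ divides that of $S$, and conjugating by $E=\bigl(\begin{smallmatrix} P & 0 \\ 0 & 1\end{smallmatrix}\bigr)$. You streamline the paper's explicit case analysis of the possible minimal polynomials into the single observation that divisibility forces $m_{S'}$ to have simple roots, and you spell out the grading-preservation step ($R_{0}R_{h}R_{0}\subseteq R_{h}$) that the paper merely asserts; both are improvements in exposition, not changes of method.
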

\begin{proof} Since $R_{0} = \operatorname{span}_{K}\{e_{1,1}, e_{2,2}, e_{3,3}, e_{1,2}, e_{2,1}\}$, the matrix $S$ has the form
\[
S = \begin{pmatrix}
    S' & 0 \\
    0 & d
\end{pmatrix},
\]
where $S' \in M_{2}(K)$ and $d \in K$. Denote by $m_{S}(x)$ the minimal polynomial of $S$, and by $m_{S'}(x)$ the minimal polynomial of $S' \in M_{2}(K)$. Of course $(x-d)$ is the minimal polynomial of the $1\times 1$ block $[d]$. 

Since $S$ is diagonalizable, by \cite[Theorem 6.6]{hoffmann1971linear}, the irreducible factors of $m_{S}$ must be linear with multiplicity $1$. Hence, there are exactly three possible cases for $m_{S}(x)$:
\[
m_{S}(x) = (x-a)(x-b)(x-d), \quad m_{S}(x) = (x-a)(x-d), \quad \text{or} \quad m_{S}(x) = (x-d),
\]
where $a$, $b$, $d$ and $e$ are pairwise distinct elements in $K$.

On the other hand, it is well known that both $m_{S'}(x)$ and $(x-d)$ must divide $m_{S}(x)$. If $m_{S}(x) = (x-d)$, then $S'$ is conjugate (hence equal) to $\begin{pmatrix} d & 0 \\ 0 & d \end{pmatrix}$. If $m_{S'}(x) = (x-a)(x-b)$ or $m_{S'}(x) = (x-a)$, then in both cases, again by \cite[Theorem 6.6]{hoffmann1971linear}, $S'$ is diagonalizable. In particular, there exists an invertible matrix $P \in M_{2}(K)$ such that $PS'P^{-1}$ is diagonal. Now, define
$E := \begin{pmatrix}
    P & 0 \\
    0 & 1
\end{pmatrix}$, then it is invertible because
\[
E\begin{pmatrix}
    P^{-1} & 0 \\
    0 & 1
\end{pmatrix}=\begin{pmatrix}
    1 & 0 & 0\\
    0 & 1 & 0\\
    0 & 0 & 1
    
\end{pmatrix}.
\]
Hence, $ESE^{-1}=D$, where $D=\operatorname{diag}(\lambda_{1},\lambda_{2},\lambda_{3})$ and $\lambda_{1}$,$\lambda_{2}$ and $\lambda_{3}$ are the eigenvalues of $S$. In particular, for any $h \in \operatorname{Supp}(R)$, we have $E R_{h} E^{-1} \subseteq R_{h}$, and the proof is complete.
\end{proof}
\begin{Proposition}
\label{main.prop.matrix.algebra}
The graded algebra $(R,\Gamma)$ does not satisfy the primeness property for graded central polynomials.
\end{Proposition}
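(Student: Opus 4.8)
The plan is to refute primeness by producing two polynomials $f$ and $g$ in disjoint sets of variables with $fg\in\mathscr{X}_{R,\Gamma}$ but $f\notin\mathscr{X}_{R,\Gamma}$. What shapes the construction is the following necessity observation. If $fg$ is proper central, then it takes a nonzero (hence invertible, scalar) value at some substitution; fixing the variables of $g$ there and letting those of $f$ vary shows that the values of $f$ all lie on a single line $KM_0$ through an invertible matrix $M_0$. Since conjugation by a degree-zero unit — an element of $GL_2(K)\times K^\ast$ sitting inside $R_0$ — preserves the grading, the set of values of $f$ is stable under conjugation of the $2\times 2$ block, and a line with this invariance and a fixed nonzero $(3,3)$-entry must be spanned by a matrix of the form $\operatorname{diag}(\lambda,\lambda,d)$. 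So the entire problem reduces to realizing a polynomial whose values lie on the line through a \emph{non-scalar invertible} such diagonal matrix: it is automatically non-central, and a companion factor turns the product scalar.

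To build such an $f$ I would use the coupling between the block and the $(3,3)$-slot coming from the off-diagonal components. Write $V_i$ for homogeneous elements of $R_{g_3-g}$ and $U_i$ for homogeneous elements of $R_{g-g_3}$. Each product $V_iU_i$ is a rank-one matrix on the block, so $[V_1U_1,V_2U_2]^2$ evaluates to $-\det[V_1U_1,V_2U_2]\,(e_{1,1}+e_{2,2})$, a scalar multiple of the block identity whose $(3,3)$-entry vanishes. Dually, $U_iV_j$ evaluates to $(u^{(i)}\cdot v^{(j)})\,e_{3,3}$ (this defines the scalar $u^{(i)}\cdot v^{(j)}$), so the word $W:=U_1V_2U_2V_1U_1V_1U_2V_2-(U_1V_2U_2V_1)^2$ lands in $Ke_{3,3}$. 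The crucial point is that $W$ evaluates to $\det[V_1U_1,V_2U_2]\,e_{3,3}$: by a Cauchy–Binet-type identity the determinant of the block commutator is a polynomial in the scalars $u^{(i)}\cdot v^{(j)}$, and these are precisely the quantities recorded at $(3,3)$.

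Granting this matching, I set $f=[V_1U_1,V_2U_2]^2+W$, so that $f$ evaluates to $\det[V_1U_1,V_2U_2]\cdot\operatorname{diag}(-1,-1,1)$. As $\operatorname{diag}(-1,-1,1)$ is invertible but not scalar (here $\operatorname{char}K=0$, so $-1\neq 1$), $f$ takes non-central values and $f\notin\mathscr{X}_{R,\Gamma}$. Taking $g$ to be the same expression in a disjoint family of off-diagonal variables, $g$ evaluates to $\det[V_1'U_1',V_2'U_2']\cdot\operatorname{diag}(-1,-1,1)$, and since $\operatorname{diag}(-1,-1,1)^2=I_3$ the product $fg$ evaluates to $\det[V_1U_1,V_2U_2]\,\det[V_1'U_1',V_2'U_2']\,I_3$. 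Thus $fg$ is central; it is proper because both determinants are simultaneously nonzero for suitable matrix-unit substitutions (e.g.\ $V_1U_1=e_{1,2}$, $V_2U_2=e_{2,1}$, whence $[V_1U_1,V_2U_2]=e_{1,1}-e_{2,2}$ has determinant $-1$), so $fg\notin T_G(R)$. This yields the desired failure of the primeness property.

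The main obstacle is the identity $W\mapsto \det[V_1U_1,V_2U_2]\,e_{3,3}$, i.e.\ checking that the $(3,3)$-word reproduces exactly the scalar returned by the block commutator square. I would verify it by reducing to diagonal data through Lemma \ref{conjugation}, conjugating the relevant degree-zero matrices to diagonal form before multiplying, and by evaluating both expressions on matrix units, where the admissible-degree bookkeeping of the Remark rules out stray cross terms; the earlier nonexistence of degree-three multilinear central polynomials confirms in passing that no shorter factorization is being missed. The remaining points — that $f$ and $g$ are homogeneous of degree $0$, that the block/$(3,3)$ split is clean because every monomial uses only off-diagonal generators, and that $\operatorname{diag}(-1,-1,1)$ is invertible and non-scalar — are routine.
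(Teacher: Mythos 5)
Your construction is correct, and it reaches the conclusion by a genuinely different and more elementary route than the paper's. The paper follows Formanek's classical scheme: it transports a commutative polynomial $L$ into the free graded algebra via $\iota$, evaluates the resulting $U$ and its cyclic shifts on a degree-zero variable $Z$, and needs diagonalizability (Lemma \ref{conjugation}), generic matrices and specialization to show that the alternating sum $P$ always takes values in $K\operatorname{diag}(1,1,-1)$; the counterexample is then $PP'$. You dispense with the degree-zero variable entirely and work only in the off-diagonal components: Cayley--Hamilton for the traceless $2\times 2$ block commutator gives $[V_1U_1,V_2U_2]^2=-\det[V_1U_1,V_2U_2](e_{1,1}+e_{2,2})$, while the $(3,3)$-corner word $W$ evaluates to $a_{12}a_{21}(a_{11}a_{22}-a_{12}a_{21})e_{3,3}$ with $a_{ij}$ defined by $U_iV_j=a_{ij}e_{3,3}$, and this scalar is exactly $\det[V_1U_1,V_2U_2]$, since $[V_1U_1,V_2U_2]=a_{12}V_1U_2-a_{21}V_2U_1$ and $\det(xy^{T}+zw^{T})=(x_1z_2-x_2z_1)(y_1w_2-y_2w_1)$ combined with Cauchy--Binet give $\det[V_1U_1,V_2U_2]=a_{12}a_{21}\det U\det V=a_{12}a_{21}(a_{11}a_{22}-a_{12}a_{21})$. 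So the identity you single out as the main obstacle does hold, and it is a short direct computation with the general parametrization $V_i=v_1^{(i)}e_{1,3}+v_2^{(i)}e_{2,3}$, $U_i=u_1^{(i)}e_{3,1}+u_2^{(i)}e_{3,2}$; you do not need Lemma \ref{conjugation} or generic matrices here, and since your $f$ is multi-quadratic rather than multilinear, the parametrized computation (not "checking on matrix units plus linearity") is the right justification. The endgame is the same in both proofs: the square of $\operatorname{diag}(\mp 1,\mp 1,\pm 1)$ is $I_3$, so the product of two disjoint copies is a proper central polynomial while each factor is visibly non-central (witnessed by $V_1=e_{1,3}$, $U_1=e_{3,2}$, $V_2=e_{2,3}$, $U_2=e_{3,1}$, which gives $\det=-1$). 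Your version buys a fully explicit, lower-degree counterexample with no diagonalization or specialization arguments; the paper's version stays closer to Formanek's central-polynomial machinery.
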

\begin{proof}
    We argue similarly to \cite{formanek1972central}. Consider the free commutative algebra in four generators $K[z_{1},z_{2},z_{3},z_{4}]$. If $(h_{1},h_{2},h_{3})=(g_{3}-g,g-g_{3},0)$, we can define a linear function $\iota\colon K[z_{1},z_{2},z_{3},z_{4}]\rightarrow K\langle x_{0}^{(0)},x_{1}^{(h_{1})},x_{2}^{(h_{2})},x_{3}^{(h_{3})}\rangle$ in the following way 
\[
\iota\big(\sum c_{a}z_{1}^{a_{1}}z_{2}^{a_{2}}z_{3}^{a_{3}}z_{4}^{a_{4}}\big)= \sum c_{a} (x_{0}^{(0)})^{a_{1}}x_{1}^{(h_{1})}(x_{0}^{(0)})^{a_{2}}x_{2}^{(h_{2})}(x_{0}^{(0)})^{a_{3}}x_{3}^{(h_{3})}(x_{0}^{(0)})^{a_{4}}.
\]

 Consider the following polynomial in $K[z_{1},z_{2},z_{3},z_{4}]$:

\begin{align*}
	L(z_{1},z_{2},z_{3},z_{4})=\prod_{2\leq i\leq 3}(z_{1}-z_{i})(z_{4}-z_{i})\prod_{2\leq j<k\leq 3}(z_{j}-z_{k})^2
\end{align*}

Let $Z=c_{1}e_{1,1}+c_{2}e_{2,2}+c_{3}e_{3,3}$ be a diagonal matrix. If $(i_{1},i_{2},i_{3})$ is a permutation of $\{1,2,3\}$, it follows that 
\[
L(c_{i_{1}},c_{i_{2}},c_{i_{3}},c_{i_{1}})=(c_{1}-c_{2})^{2}(c_{1}-c_{3})^{2}(c_{2}-c_{3})^{2}=D(c_1,c_2,c_3)
\]
is the discriminant of $c_{1},c_{2},c_{3}$, which, by abuse of notation, will be called \textsl{discriminant} of $Z$; we denote it by $D(Z)$. Now, consider $X_{1}:=e_{i,3}$, $X_{2}:=e_{3,j}$, $X_{3}:=e_{k,l}$ and set $U:=\iota(L)$. For each $\gamma\in S_{3}$ define 
\[
M_{\gamma}(x_{1}^{(h_{1})},x_{2}^{(h_{2})},x_{3}^{(h_{3})}):=x_{\gamma(1)}^{(h_{1})}x_{\gamma(2)}^{(h_{2})}x_{\gamma(3)}^{(h_{3})}.
\]
We notice that for every $a_{1}$, $a_{2}$, $a_{3}$, $a_{4}\in \mathbb{N}$, we have
\[
Z^{a_{1}}X_{1}Z^{a_{2}}X_{2}Z^{a_{3}}X_{3}Z^{a_{4}}=c_{i}^{a_{1}}c_{3}^{a_{2}}c_{k}^{a_{3}}c_{l}^{a_{4}}M_{1}(X_{1},X_{2},X_{3}),
\]
therefore $U(Z,X_{1},X_{2},X_{3})=L(c_{i},c_{3},c_{k},c_{l})M_{1}(X_{1},X_{2},X_{3})$. On the other hand, $M_{1}(X_{1},X_{2},X_{3})=e_{i,3}e_{3,j}e_{k,l}\neq 0$ if and only if $j=k$, and $L(c_{i},c_{3},c_{k},c_{l})\neq 0$ if and only if $(i,3,k)$ is a permutation of $\{1,2,3\}$ and $i=l$.

Now, we have
\[
U(Z,X_{2},X_{3},X_{1})=L(c_{3},c_{k},c_{i},c_{3})M_{(1,2,3)}(X_{1},X_{2},X_{3})
\]
where $L(c_{3},c_{k},c_{i},c_{3})\neq 0$ if and only if $(i,k)$ is a permutation of $\{1,2\}$ and, moreover $M_{(1,2,3)}(X_{1},X_{2},X_{3})\neq 0$, if and only if $j=k$ and $l=i$. 

Finally, in a similar way we get
\[
U(Z,X_{3},X_{1},X_{2})=L(c_{k},c_{i},c_{3},c_{j})M_{(1,3,2)}(X_{1},X_{2},X_{3}).
\]
In this last case, $L(c_{k},c_{i},c_{3},c_{j})\neq 0$, if and only if $(k,i,3)$ is a permutation of $\{1,2,3\}$ and $j=k$. Moreover $M_{(1,3,2)}(X_{1},X_{2},X_{3})\neq 0$ if and only if $l=i$.

It follows $U(Z,X_{1},X_{2},X_{3})=0$ if and only if $U(Z,X_{2},X_{3},X_{1})=0$, if and only if  $U(Z,X_{3},X_{1},X_{2})=0$. When one among $U(Z,X_1,X_2,X_3)$, $U(Z,X_2,X_3,X_1)$, $U(Z,X_3,X_1,X_2)$ is non-zero we get
\begin{align}
\label{eq.key.2}
U(Z,X_1,X_2,X_3) &= D(Z)e_{i,i}, \nonumber\\
U(Z,X_2,X_3,X_1) &= D(Z)e_{3,3}, \\
U(Z,X_3,X_1,X_2) &= D(Z)e_{j,j}. \nonumber
\end{align}

On the other hand, suppose $Z=e_{1,2}$. Then, if $\sigma=(1,2,3)$ and $1\leq t\leq 3$ in all products of the form $Z^{a_{1}}X_{\sigma^{t}(1)}Z^{a_{2}}X_{\sigma^{t}(2)}Z^{a_{3}}X_{\sigma^{t}(k)}Z^{a_{4}}$ we will always find a term of the form $Ze_{3,\star}$. Hence $Ze_{3,\star}=0$ and $Z^{a_{1}}X_{\sigma^{t}(1)}Z^{a_{2}}X_{\sigma^{t}(2)}Z^{a_{3}}X_{\sigma^{t}(k)}Z^{a_{4}}=0$. Similarly for $Z=e_{2,1}$. 

Since the expressions $U(x_{0}^{(0)},x_{1}^{(h_{1})},x_{2}^{(h_{2})},x_{3}^{(h_{3})})$,  $U(x_{0}^{(0)},x_{3}^{(h_{3})},x_{1}^{(h_{1})},x_{2}^{(h_{2})})$ and $U(x_{0}^{(0)},x_{2}^{(h_{2})},x_{3}^{(h_{3})},x_{1}^{(h_{1})})$ are linear in $x_{1}^{(h_{1})}$, $x_{2}^{(h_{2})}$, $x_{3}^{(h_{3})} $ we conclude that if $Z$ is a diagonal matrix, then $U(Z,X_{1},X_{2},X_{3})$, $U(Z,X_{2},X_{3},X_{1})$ and $U(Z,X_{3},X_{1},X_{2})$ have the form established in (\ref{eq.key.2}), for all $X_{1}\in R_{h_{1}}$, $X_{2}\in R_{h_{2}}$ and $X_{3}\in R_{h_{3}}$.

Consider the polynomial
\begin{align*}
	P(x_{0}^{(0)},x_{1}^{(h_{1})},x_{2}^{(h_{2})},x_{3}^{(h_{3})}) &:= U(x_{0}^{(0)},x_{1}^{(h_{1})},x_{2}^{(h_{2})},x_{3}^{(h_{3})})+\\
	&+U(x_{0}^{(0)},x_{3}^{(h_{3})},x_{1}^{(h_{1})},x_{2}^{(h_{2})})-U(x_{0}^{(0)},x_{2}^{(h_{2})},x_{3}^{(h_{3})},x_{1}^{(h_{1})}).
\end{align*}

Since the polynomials $U(Z,X_1,X_2,X_3)$, $U(Z,X_2,X_3,X_1) $ and $U(Z,X_3,X_1,X_2)$ vanish for $Z=e_{1,2}$ or $Z=e_{2,1}$, we conclude that $P(Z,X_{1},X_{2},X_{3})$ also vanishes in this case. Moreover, for every diagonal matrix $Z$, and $X_{i}\in R_{h_{i}}$, we get
\begin{equation}
\label{eq.key.3}
    P(Z,X_{1},X_{2},X_{3})=\begin{pmatrix}
        D(Z) & 0 & 0\\
        0 & D(Z) & 0\\
        0 & 0 & -D(Z)
    \end{pmatrix}.
\end{equation}

By Lemma \ref{conjugation}, the components $R_{0}$, $R_{g_{3}-g}$, and $R_{g-g_{3}}$ are invariant under conjugation by an invertible matrix in $R_{0}$. Furthermore, given $Z'\in R_{0}$ a diagonalizable matrix, again by Lemma \ref{conjugation}, there exists an invertible matrix $E=\begin{pmatrix}
    S & 0\\
    0 & e
\end{pmatrix}\in R_{0}$, such that $EZ'E^{-1}=Z$, where $Z\in R_{0}$ is a diagonal matrix. Hence,  given $X_{i}\in R_{h_{i}}$, $1\leq i\leq 3$, we get
\begin{align*}
EP(Z,X_{1},X_{2},X_{3})E^{-1 }&=\begin{pmatrix}
    S & 0\\
    0 & e
\end{pmatrix}\begin{pmatrix}
        D(Z) & 0 & 0\\
        0 & D(Z) & 0\\
        0 & 0 & -D(Z)
        \end{pmatrix}\begin{pmatrix}
    S^{-1} & 0\\
    0 & e^{-1}
\end{pmatrix}\\
&=P(Z,X_{1},X_{2},X_{3})
\end{align*}
implying that for every diagonalizable matrix $Z'\in R_{0}$ and $X_{i}\in R_{h_{i}}$, $1\leq i\leq 3$, the evaluation $P(Z',X_{1},X_{2},X_{3})$ is of the form given in (\ref{eq.key.3}).

We only need to show that these polynomials have the form described above, for every $Z \in R_{0}$.

 Without loss of generality, we can consider $Z\in R_{0}$ as a generic matrix whose entries are independent commuting indeterminates (in this case such a matrix is diagonalizable by \cite[Lemma 7.2.5]{drensky2000free}). Then, the fact that $P(Z,X_1,X_2,X_3)$ has the form (\ref{eq.key.3}) for every $Z \in R_{0}$ and  $X_{i}\in R_{h_{i}}$, $1\leq i\leq 3$, follows by specialization, as in \cite[Theorem 
 7.3.5]{drensky2000free} or \cite{formanek1972central}.

Since $K$ is an algebraically closed field of characteristic $0$, there exists a diagonal matrix $Z$ with distinct eigenvalues, so $D(Z)\neq 0$. Hence, if in the elements $X_{1}=e_{i,3}$, $X_{2}=e_{3,j}$ and $X_{3}=e_{k,l}$ we have $j=k=2$ and $l=i=1$, we get
\[
P(Z,X_{1},X_{2},X_{3})=\begin{pmatrix}
	D(Z) & 0 & 0\\
	0 & D(Z) & 0\\
	0 & 0 & -D(Z)
\end{pmatrix}\neq 0.
\]
In particular, $P\in T_{G}(R)$. Moreover, by construction, for all $Z\in R_{0}$, $X_{1}\in R_{h_{1}}$, $X_{2}\in R_{h_{2}}$ and $X_{3}\in R_{h_{3}}$ such that $P(Z,X_{1},X_{2},X_{3})\neq 0$, we have
\[
P(Z,X_{1},X_{2},X_{3})\in K\begin{pmatrix}
	1 & 0 & 0\\
	0 & 1 & 0\\
	0 & 0 & -1
\end{pmatrix}.
\]
Now, consider $P'$ being a copy of $P$ in distinct variables. Of course, $PP'\in \mathscr{X}_{R,\Gamma}$ although $P\notin \mathscr{X}_{R,\Gamma}$ and we are done.
\end{proof}

\begin{Remark} 
\label{remark.trivial.grading}
The same result as in Proposition \ref{main.prop.matrix.algebra} can be obtained in an analogous way for an elementary grading on $M_{3}(R)$ given by a triple $(g_{1}, g_{2}, g_{3})$ such that exatly two among $g_1$, $g_2$, $g_3$ are equal. Let us observe here that $M_{n}(K)$, $n\geq 2$) with the trivial grading satisfies the primeness property for graded central polynomials, as shown in \cite{regev1979primeness}.
\end{Remark}

Now, since $K$ is algebraically closed, and $G$ is a finite abelian group, we have exactly $|G|$ irreducible characters of $G$. Therefore, by combining Theorems \ref{Diniz.Claudemir.Theorem} and ~\ref{elementary}, Proposition~\ref{main.prop.matrix.algebra}, and Remark~\ref{remark.trivial.grading}, we obtain the following result.

\begin{Theorem}
\label{M2.M3.Trivial}
{Let $K$ be an algebraically closed field,} then for each $k\in \{2,3\}$, there is no non-trivial grading $\Gamma$ on $M_{k}(K)$, given by a finite abelian group, such that $(M_{k}(K),\Gamma)$ satisfies the primeness property for graded central polynomials.
\end{Theorem}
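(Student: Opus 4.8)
The plan is to reduce the classification of non-trivial gradings on $M_k(K)$, $k \in \{2,3\}$, to the cases already settled in the preceding results, using the Bahturin--Sehgal--Zaicev structural decomposition $(M_n(K), \Gamma) \cong (K^\alpha Q \otimes M_r(K), \Gamma')$ with $n = |Q| \cdot r$. Every non-trivial grading falls under Theorem~\ref{elementary}: if the bicharacter induced by $\alpha$ is non-trivial then part $(i)$ forces $\Gamma$ to be elementary (otherwise primeness already fails), and if it is trivial then part $(ii)$ reduces primeness for $(M_n(K), \Gamma)$ to primeness for the elementary grading on $M_r(K)$ induced by the tuple $r$. Thus in either regime the question collapses to the primeness property for \emph{elementary} gradings on a smaller matrix algebra, and I would organize the proof as a finite case analysis on $k$.

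For $k = 2$, a non-trivial elementary grading on $M_2(K)$ is given by a pair $(g_1, g_2)$. If $g_1 = g_2$ the grading is trivial, so we may assume $g_1 \neq g_2$; then the grading is a crossed product grading by the two-element subgroup they generate, and Theorem~\ref{Diniz.Claudemir.Theorem} applies. Since $K$ is algebraically closed and $G$ is a finite abelian group, any non-trivial subgroup of order $\geq 2$ admits a non-trivial homomorphism into $K^*$ (a non-trivial irreducible character), so the condition ``$G$ has no non-trivial homomorphism $G \to K^*$'' fails, and hence $M_2(K)$ with this grading does \emph{not} satisfy the primeness property. Together with the cocycle cases handled by Theorem~\ref{elementary}$(i)$ and Corollaries~\ref{Pauli.grading} and~\ref{aux.lemma.1}, this exhausts all non-trivial gradings on $M_2(K)$.

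For $k = 3$, the decomposition gives $|Q| \cdot r = 3$, so the only possibilities are $(|Q|, r) = (3,1)$ or $(1,3)$ (the case $|Q| = 1$ with general $r$ giving $r = 3$). When $|Q| = 3$, $r = 1$ with a non-trivial induced bicharacter we have the Pauli grading, excluded by Corollary~\ref{Pauli.grading}; more generally Theorem~\ref{elementary} reduces everything to an elementary grading on $M_3(K)$ given by a triple $(g_1, g_2, g_3)$. Here I would split according to how many of the $g_i$ coincide: if all three are distinct, the grading is a crossed product grading and Theorem~\ref{Diniz.Claudemir.Theorem} again fails primeness because $G$ necessarily has a non-trivial character; if exactly two coincide, Proposition~\ref{main.prop.matrix.algebra} together with Remark~\ref{remark.trivial.grading} shows primeness fails; and if all three coincide the grading is trivial, excluded by hypothesis. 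Collecting these subcases yields the claim for $k = 3$.

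The main obstacle, and the step requiring the most care, is ensuring the case analysis is genuinely \emph{exhaustive} and that the hypotheses of each cited result are actually met. In particular one must verify that for every non-trivial finite abelian group $G$ involved there is indeed a non-trivial homomorphism $G \to K^*$ (using algebraic closedness and $\operatorname{char} K = 0$, so that the character group of $G$ is isomorphic to $G$ and thus non-trivial), so that the negative direction of Theorem~\ref{Diniz.Claudemir.Theorem} genuinely applies; and one must confirm that the ``exactly two equal'' subcase of the triple is covered by Remark~\ref{remark.trivial.grading} up to relabeling the indices. Once these verifications are in place, the theorem follows by assembling the finitely many cases, with no further computation needed.
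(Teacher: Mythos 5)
Your proposal is correct and follows essentially the same route as the paper: reduce to elementary gradings via Theorem~\ref{elementary} (with Corollaries~\ref{Pauli.grading} and~\ref{aux.lemma.1} killing the non-trivial-cocycle cases), then dispose of the pairwise-distinct tuples by Theorem~\ref{Diniz.Claudemir.Theorem} using that a non-trivial finite abelian group over an algebraically closed field of characteristic $0$ always has a non-trivial character, and of the ``exactly two equal'' triples by Proposition~\ref{main.prop.matrix.algebra} and Remark~\ref{remark.trivial.grading}. The only slip is the dimension count in the Bahturin--Sehgal--Zaicev decomposition (it is $|Q|\cdot r^{2}=n^{2}$, not $|Q|\cdot r=n$, so for $n=3$ the possibilities are $(|Q|,r)=(9,1)$ or $(1,3)$), but this is harmless since Theorem~\ref{elementary} already performs the reduction for you.
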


Theorems \ref{M2.M3.Trivial} and \ref{Diniz.Claudemir.Theorem} give us ground to suspect that over algebraically closed fields of characteristic $0$, there is no grading by an abelian group on $M_{n}(K)$, for $n>3$, such that $M_{n}(K)$ has the primeness property for graded central polynomials. We state it in the following conjecture:

\begin{Conjecture}
Let $G$ be a finite nontrivial abelian group. Over an algebraically closed field $K$ of characteristic zero, no nontrivial $G$-grading $\Gamma$ on $M_n(K)$, with $n>3$, satisfies the primeness property for graded central polynomials.
\end{Conjecture}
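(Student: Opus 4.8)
The plan is to prove the conjecture by reducing every grading to an elementary one via Theorem~\ref{elementary} and then showing that no nontrivial elementary grading on a matrix algebra can have the primeness property, arguing by strong induction on $n$. Suppose, for contradiction, that some nontrivial $G$-grading $\Gamma$ on $M_n(K)$ (with $n>3$) satisfies the primeness property, and write $(M_n(K),\Gamma)\cong(K^{\alpha}Q\otimes M_r(K),\Gamma')$ as in Theorem~\ref{elementary}. If the bicharacter induced by $\alpha$ is nontrivial, part $(i)$ forces $\Gamma$ to be elementary, so $Q=\{0\}$ and $r=n$; if it is trivial, part $(ii)$ transports the primeness property to the elementary grading on $M_r(K)$ with $r\le n$. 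In either case we are left with a nontrivial elementary grading on some $M_m(K)$, $2\le m\le n$ (it is nontrivial, for otherwise $\Gamma$ itself would be trivial). For $m\in\{2,3\}$ this contradicts Theorem~\ref{M2.M3.Trivial}, and for $3<m<n$ it contradicts the inductive hypothesis. Hence the entire problem collapses onto the single claim that \emph{a nontrivial elementary grading on $M_n(K)$ itself never has the primeness property}, which is the genuine content to be established.

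For this elementary claim I would split according to whether the defining tuple $(g_1,\dots,g_n)$ has a repeated entry. If the $g_i$ are pairwise distinct, Theorem~\ref{Diniz.Claudemir.Theorem} applies directly: the primeness property would force the grading to be a crossed product \emph{and} the grading group $G$ to admit no nontrivial homomorphism $G\to K^{\ast}$. But $G$ is a nontrivial finite abelian group and $K$ is algebraically closed of characteristic $0$, so $K^{\ast}$ contains all roots of unity and $\operatorname{Hom}(G,K^{\ast})$ has order $|G|>1$; a nontrivial character always exists and the primeness property fails. Thus the distinct case is settled using only results already available in the paper.

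The remaining, decisive case is when some value repeats, say $g_i=g_j$ with $i\ne j$; here the strategy is to globalise the construction of Proposition~\ref{main.prop.matrix.algebra}. Because $g_i=g_j$, the units $e_{i,j}$ and $e_{j,i}$ lie in $R_0$, so $R_0$ contains a block isomorphic to $M_2(K)$ (and in general $\bigoplus_k M_{m_k}(K)$, where $m_k$ records the multiplicities of the repeated values). Using a Formanek/discriminant polynomial $U=\iota(L)$ built from a degree-$0$ variable surrounding graded variables, together with a suitable alternating-sign combination of its cyclic shifts exactly as in Proposition~\ref{main.prop.matrix.algebra}, I would produce a graded polynomial $P$ whose every admissible homogeneous evaluation lies in $K\cdot D$ for one \emph{fixed} diagonal involution $D$ with $D^2=I$ but $D\ne\pm I$ (the simplest target being $\operatorname{diag}(1,\dots,1,-1)$, which exists since repetition forces $n\ge 3$). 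Then $P\notin\mathscr{X}_{M_n(K),\Gamma}$ since $D$ is not scalar, whereas a disjoint copy $P'$ makes $PP'$ evaluate into $K\cdot D^2=K\cdot I\subseteq Z(M_n(K))$, so $PP'\in\mathscr{X}_{M_n(K),\Gamma}$; this violates primeness precisely as in Proposition~\ref{main.prop.matrix.algebra}.

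The hard part will be carrying out this last construction in full generality. In the $3\times 3$ case one verifies the value of $P$ on all matrix-unit evaluations by hand and then upgrades to an arbitrary $Z\in R_0$ by conjugation-invariance (Lemma~\ref{conjugation}) and by specialising a generic diagonalisable matrix; for larger $m$ both steps become delicate. One must engineer $L$ so that the full discriminant of the eigenvalues of $Z$ forces scalar behaviour on the complementary diagonal entries while the repeated pair supplies the single sign flip, and one must check the outcome on \emph{every} admissible homogeneous evaluation, including those passing through the larger off-diagonal blocks $M_{m_k}(K)$ of $R_0$, not merely through a convenient $3\times 3$ sub-configuration. Establishing the required strengthening of Lemma~\ref{conjugation} (grading-compatible simultaneous diagonalisation) and the generic-matrix specialisation for arbitrary $m$ and arbitrary repetition patterns is exactly where I expect the main obstacle to lie, and it is the reason the statement is posed here as a conjecture rather than a theorem.
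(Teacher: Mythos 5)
The statement you are proving is stated in the paper only as a \emph{conjecture}; the authors give no proof, and they arrive at it precisely by the reduction you describe. Your first two steps are sound and are essentially the paper's own reasoning for $n\in\{2,3\}$: Theorem~\ref{elementary} reduces any grading to an elementary one (in fact in both cases $(i)$ and $(ii)$ the proof forces $Q=\{0\}$ and $r=n$, so your strong induction on $m<n$ is vacuous), and in the pairwise-distinct case Theorem~\ref{Diniz.Claudemir.Theorem} together with the observation that a nontrivial finite abelian group always admits a nontrivial character into $K^{\ast}$ (as $K$ is algebraically closed of characteristic $0$) kills the primeness property. Nothing new is contributed there, but nothing is wrong either.

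The genuine gap is exactly where you locate it yourself: the case of an elementary grading whose defining tuple has a repeated entry, for $n>3$. What you offer there is a strategy, not a proof. Concretely, you have not exhibited the polynomial $P$, not specified the analogue of $L$ for a general multiplicity pattern $(m_1,\dots,m_k)$, and not verified the crucial uniformity claim that \emph{every} admissible homogeneous evaluation of $P$ lands in $K\cdot D$ for one fixed non-scalar $D$ with $D^2$ scalar. In the $3\times 3$ case this uniformity is checked by hand on matrix units and then extended by Lemma~\ref{conjugation} and generic-matrix specialisation; for general $n$ the identity component $R_0$ is a direct sum of several matrix blocks, the nonzero components connect these blocks in ways that admit many more inequivalent evaluation patterns, and there is no guarantee that a single alternating combination of cyclic shifts of a Formanek-type polynomial produces the same matrix $D$ (up to scalar) on all of them rather than different idempotent combinations on different patterns. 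Since this is precisely the content that separates Theorem~\ref{M2.M3.Trivial} from the Conjecture, your proposal does not establish the statement; it restates the open problem in a sharper form.
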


We observe that the assumptions of $G$ being abelian and $K$ being an algebraically closed field cannot be removed. For, consider the crossed product gradings on $M_{60}(K)$ (for $K = \mathbb{R}$ or $\mathbb{C}$) and on $M_5(\mathbb{R})$ by the alternating group $A_5$ and by the cyclic group $\mathbb{Z}_5$, respectively. By applying Theorem~\ref{Diniz.Claudemir.Theorem}, one can verify that these graded algebras satisfy the primeness property for graded central polynomials.

\section{\texorpdfstring{Primeness property for central polynomials in $\mathbb{Z}_{2}$-regular gradings}{Primeness property for central polynomials in Z2-regular gradings}}

In this section, we will work with the primeness property in the ordinary sense. If $R$ is a PI-algebra, we say that $R$ has the primeness property for central polynomials if, for every  $f$, $g \in K\langle X\rangle$ in disjoint variables such that $fg$ is a central polynomial, then both $f$ and $g$ are central polynomials. 

Our main goal is to show that the $\mathbb{Z}_{2}$-graded regular algebras with minimal regular decomposition satisfy the primeness property for central polynomials. To this end we apply techniques developed in \cite{L.P.K.3}.  More precisely, we use the following idea. If $A$ is a $\mathbb{Z}_{2}$-graded regular algebra with minimal regular decomposition, it satisfies the primeness property for central polynomials because there exists an embedding of $E$ into $A$ (see \cite[Theorem 17]{L.P.K.3}), and $E$ itself satisfies this property by \cite[Proposition 3.5]{diniz2015primeness}. We emphasize that this is not true in general: it may happen that an algebra $R$ does not satisfy the primeness property, while a subalgebra of $A$ does. For instance, if $R = E_{3} = \langle v_{1}, v_{2}, v_{3} \rangle$ is the Grassmann algebra in three generators (that is $E(3)$ is the Grassmann algebra of a vector space of dimension 3), then $R$ does not satisfy the primeness property, since $x[y,z]$ is a proper central polynomial but $x$ is not central in $R$. However, if we consider the subalgebra $B = \operatorname{Span}\{1, v_{1}v_{2}\}$, then $B$ trivially satisfies the primeness property for central polynomials, because it is commutative. In the direction of \cite{L.P.K.3}, this further reinforces the idea that the Grassmann algebras “almost” completely determine a $\mathbb{Z}_{2}$-graded regular algebra whose regular decomposition is minimal.

Let $A$ be a $\mathbb{Z}_{2}$-graded regular algebra such that its regular decomposition is minimal. Then we know that $M^{A} = M^{E}$, and thus its bicharacter $\tau$ is given by $\tau(0,0)=\tau(0,1)=\tau(1,0)=1$ and $\tau(1,1)=-1$. Moreover, we know that there exists a graded embedding of $E$ into $A$, and without loss of generality, we may assume that $E \subseteq A$. 

\begin{Definition}  An element $a \in A_{1}$ satisfying $ab = 0$ for all $b \in A_{1}$ will be called a \textsl{$(1,1)$-annihilator} of $A$.
\end{Definition}
 The set of all $(1,1)$-annihilators of $A$ will be denoted by $\mathfrak{T}$.

\begin{Lemma} The set $\mathfrak{T}$ is a graded ideal of $A$ such that $\mathfrak{T}_0=0$, and $\mathfrak{T}^2=0$. The quotient $A/\mathfrak{T}$ is a non-zero $\mathbb{Z}_{2}$-graded regular algebra with bicharacter $\tau$. 
\end{Lemma}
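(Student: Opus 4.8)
The plan is to exploit the commutation data encoded in $\tau$: the equalities $\tau(0,0)=\tau(0,1)=\tau(1,0)=1$ say that $A_0$ is central in $A$, while $\tau(1,1)=-1$ says that any two elements of $A_1$ anticommute. With these two facts most of the assertions become formal. First I would observe that a $(1,1)$-annihilator is by definition a homogeneous element of degree $1$, so $\mathfrak{T}\subseteq A_1$; since $\mathfrak{T}$ is visibly closed under addition and scalar multiplication it is a homogeneous subspace concentrated in degree $1$, whence $\mathfrak{T}_0=0$ is automatic. The identity $\mathfrak{T}^2=0$ is equally immediate: for $a,a'\in\mathfrak{T}\subseteq A_1$ one has $aa'=0$ because $a$ annihilates all of $A_1$.

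Next I would check that $\mathfrak{T}$ is a two-sided ideal by testing multiplication by homogeneous elements. Multiplication by $A_0$ is harmless: for $a\in\mathfrak{T}$ and $r\in A_0$, centrality gives $(ra)b=r(ab)=0$ for every $b\in A_1$, so $ra\in\mathfrak{T}$, and likewise $ar\in\mathfrak{T}$. The only place where anticommutativity enters is multiplication by $A_1$: for $a\in\mathfrak{T}$ and $r\in A_1$ we have $ar=0$ (since $r\in A_1$) and hence $ra=\tau(1,1)\,ar=-ar=0$, so both one-sided products vanish and lie trivially in $\mathfrak{T}$. Splitting an arbitrary element of $A$ into its homogeneous parts then shows $\mathfrak{T}$ is a graded two-sided ideal.

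The substantive part is that $A/\mathfrak{T}$ is again regular with bicharacter $\tau$. Because $\mathfrak{T}$ is graded, the quotient inherits the grading $(A/\mathfrak{T})_0=A_0$ and $(A/\mathfrak{T})_1=A_1/\mathfrak{T}$, and it is non-zero since $A_0\neq 0$ (apply regularity of $A$ to the $1$-tuple $(0)$). The commutation axiom descends verbatim, $\overline{a_g}\,\overline{a_h}=\overline{a_ga_h}=\tau(g,h)\,\overline{a_ha_g}=\tau(g,h)\,\overline{a_h}\,\overline{a_g}$, so axiom (ii) holds with the same $\tau$. The hard part will be the existence axiom (i): a nonzero product in $A$ witnessing a tuple might collapse into $\mathfrak{T}$ in the quotient. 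I would handle this according to the total degree. If $g_1+\cdots+g_n=0$, any witness provided by regularity of $A$ lies in $A_0$, which meets $\mathfrak{T}$ only in $0$, so its image is nonzero. If $g_1+\cdots+g_n=1$, the product lies in $A_1$ and could a priori be a $(1,1)$-annihilator; to prevent this I would apply regularity of $A$ to the \emph{enlarged} tuple $(g_1,\dots,g_n,1)$, obtaining $a_1,\dots,a_n,a_{n+1}$ with $a_1\cdots a_na_{n+1}\neq 0$ and $a_{n+1}\in A_1$. Then $a_1\cdots a_n$ fails to annihilate $a_{n+1}\in A_1$, so $a_1\cdots a_n\notin\mathfrak{T}$ and its image in $A/\mathfrak{T}$ is nonzero. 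This establishes (i).

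Finally, to see that $\tau$ is genuinely the bicharacter of the quotient rather than a degeneration, I would note that (i) applied to the tuple $(1,1)$ yields $a,b\in A_1$ with $\overline{a}\,\overline{b}\neq 0$; since $\operatorname{char}K=0$, the relation $\overline{a}\,\overline{b}=-\overline{b}\,\overline{a}$ then forces $\overline{a}\,\overline{b}\neq\overline{b}\,\overline{a}$, confirming that the scalar in bidegree $(1,1)$ equals $-1=\tau(1,1)$, while the remaining scalars equal $1$ trivially. The single genuine obstacle throughout is thus the odd-total-degree instance of axiom (i), which the appended-generator argument resolves.
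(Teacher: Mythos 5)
Your proof is correct. For the ``obvious'' part (graded ideal, $\mathfrak{T}_0=0$, $\mathfrak{T}^2=0$) you and the paper agree, the paper simply declining to spell it out. For the substantive claim --- that regularity survives the quotient --- you take a genuinely different route: the paper's one-line argument leans on the embedding $E\subseteq A$ (from the cited Theorem 17 of \cite{L.P.K.3}), the point being that no nonzero homogeneous product of Grassmann generators is a $(1,1)$-annihilator (one can always multiply by a fresh generator), so $E\cap\mathfrak{T}=0$ and $E$ itself supplies witnesses for axiom (i) in $A/\mathfrak{T}$. You instead work intrinsically with the regularity of $A$: append a degree-$1$ slot to the tuple, obtain $a_1\cdots a_n a_{n+1}\neq 0$ with $a_{n+1}\in A_1$, and conclude $a_1\cdots a_n\notin\mathfrak{T}$. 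Your argument is more self-contained (it does not invoke the embedding theorem and would apply to any $G$-graded regular algebra and any subset of $G$ playing the role of the ``annihilated'' component), while the paper's is shorter given the machinery already in place in that section. Your closing observation that the bicharacter of the quotient is genuinely $\tau$ (because a nonzero product in bidegree $(1,1)$ exists and $\operatorname{char}K\neq 2$) is a small but worthwhile point that the paper leaves implicit.
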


\begin{proof} The first two statements are obvious. Since $E\subseteq A$, it is immediate that $A/\mathfrak{T}$ is a non-zero $\mathbb{Z}_{2}$-graded regular algebra with bicharacter $\tau$.    
\end{proof}

As the regular decomposition of $A$ is minimal, and $\tau(1,1)=-1$, we have, by \cite[Lemma 8]{L.P.K.3} that the $\mathbb{Z}_2$-graded identities of $A$ and $E$ are the same, hence $T(A)=T(E)$. Moreover, since $E\subseteq A$, with the same argument as in \cite[Lemma 3.4]{diniz2015primeness}, we obtain the following technical result.

\begin{Lemma}
\label{Aux.Lemma.ordinary.case}
Let $\Gamma\colon A=A_{0}\oplus A_{1}$ be a $\mathbb{Z}_{2}$-graded regular algebra whose regular decomposition is minimal. If $f(x_{1},\ldots, x_{m})$ is a multilinear polynomial that is not an identity for $A$, then there exist $a_{1}$, \dots, $a_{m}$ in $E\subseteq A$ such that $0\neq f(a_{1},\ldots, a_{m})\in Z(E)\subseteq Z(A)$.
\end{Lemma}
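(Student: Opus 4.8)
The plan is to push the whole problem down to the Grassmann algebra $E$ and then run the explicit multilinear computation for $E$ verbatim. Two preliminary observations make the reduction legitimate. First, since $E\subseteq A$ as a graded subalgebra and, as recorded above, $T(A)=T(E)$, a multilinear $f\notin T(A)$ is in particular not an identity of $E$; hence it suffices to exhibit the required substitution using elements $a_i\in E$. Second, regularity of $A$ gives $A_0\subseteq Z(A)$: from $\tau(0,0)=\tau(0,1)=1$ we get $a_0b=\tau(0,h)ba_0=ba_0$ for every $a_0\in A_0$ and every homogeneous $b\in A_h$, so $a_0$ is central. Because the embedding is graded, $Z(E)=E_0\subseteq A_0\subseteq Z(A)$, and therefore it is enough to produce $a_1,\dots,a_m\in E$ with $0\neq f(a_1,\dots,a_m)\in E_0$.

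Next I would invoke the Krakowski--Regev normal form: modulo $T(E)$ the multilinear polynomial $f$ is congruent to a linear combination of monomials
\[
x_{i_1}\cdots x_{i_s}\,[x_{j_1},x_{j_2}]\cdots[x_{j_{2t-1}},x_{j_{2t}}],\qquad s+2t=m,
\]
with $i_1<\cdots<i_s$ and the commutators listed in the standard order, and in $E$ every factor $[x,y]$ is central. Since $f\notin T(E)$, at least one coefficient in this expansion is nonzero; fix such a monomial $M$. I would then evaluate $f$ on the substitution adapted to $M$: replace each variable occurring inside a commutator of $M$ by a distinct generator $e_k$ of $E$, so that those commutators become $2e_ke_{k'}\in E_0$, and replace each free variable of $M$ by a distinct even element $e_ke_{k'}\in E_0$, all generators being pairwise distinct. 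The commutator variables come in pairs, so the number of odd substitutions is even and the value has $\mathbb{Z}_2$-degree $0$; thus, if nonzero, it lies in $E_0=Z(E)\subseteq Z(A)$, exactly as wanted.

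The point where real work is needed --- and where I would import the argument of \cite[Lemma 3.4]{diniz2015primeness} essentially verbatim --- is to show that this evaluation does not vanish through cancellation among the several normal-form monomials that survive the chosen substitution. This is handled in the style of \cite[Theorem 7.3.5]{drensky2000free}: order the monomials so that $M$ is leading with respect to the chosen odd set, and pass to generic elements of $E$ so that a specialization isolates the nonzero coefficient of $M$, preventing the remaining contributions from annihilating it. Producing a single nonzero value in $E_0$ then completes the proof. I expect this non-cancellation step to be the only genuinely delicate part; everything else is the two reductions above together with bookkeeping in the Krakowski--Regev normal form.
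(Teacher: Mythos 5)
Your proposal is correct and follows essentially the same route as the paper: the paper's proof of this lemma consists precisely of the two reductions you make explicit (namely $T(A)=T(E)$ from the minimality of the regular decomposition, and $Z(E)=E_0\subseteq A_0\subseteq Z(A)$ from the graded embedding $E\subseteq A$ and regularity), after which it defers, exactly as you do, to the argument of \cite[Lemma 3.4]{diniz2015primeness} for the non-vanishing of a central evaluation in $E$. Your sketch of that deferred step via the Krakowski--Regev normal form is consistent with the cited argument, and you correctly identify the non-cancellation among surviving normal-form monomials as the only genuinely delicate point.
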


\begin{Lemma} 
\label{Aux.equivalence}
Let $\Gamma\colon A = A_0 \oplus A_1$ be a $\mathbb{Z}_2$-graded regular algebra whose regular decomposition is minimal. If $B:=A/\mathfrak{T}$ satisfies the primeness property for central polynomials, then so does $A$.
\end{Lemma}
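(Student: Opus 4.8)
The plan is to show that $A$ and $B:=A/\mathfrak{T}$ have exactly the same central polynomials. Since we already know $T(A)=T(E)$, and $B$ is again a $\mathbb{Z}_2$-graded regular algebra with the same bicharacter $\tau$ (so that $T(B)=T(E)=T(A)$), equality of the central polynomials will force $A$ and $B$ to have the same set of \emph{proper} central polynomials. As the primeness property is a statement purely about that set together with the multiplication of $K\langle X\rangle$, it will then transfer verbatim from $B$ to $A$ (in fact in both directions, although we only need one). Throughout I write $\pi\colon A\to B$ for the canonical graded projection and recall that $\mathfrak{T}$ is a graded ideal with $\mathfrak{T}_0=0$.

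The first step is to pin down the two centres. Since $\tau(0,0)=\tau(0,1)=1$, every element of $A_0$ commutes with all of $A$, so $A_0\subseteq Z(A)$; and for $a\in A_1$ the regularity relation $ab=\tau(1,1)ba=-ba$ gives $[a,b]=2ab$ whenever $b\in A_1$, so (using $\operatorname{char}K=0$) one has $a\in Z(A)$ if and only if $aA_1=0$, that is, if and only if $a\in\mathfrak{T}$. As $Z(A)$ is a graded subspace, this yields $Z(A)=A_0\oplus\mathfrak{T}$. The same computation applied to $B$ gives $B_0\subseteq Z(B)$. From $Z(A)=A_0\oplus\mathfrak{T}$ and $\pi(\mathfrak{T})=0$ I obtain $\pi(Z(A))=\pi(A_0)=B_0\subseteq Z(B)$, so evaluating any central polynomial $h$ of $A$ on $B$ through lifts gives $h(\overline{a})=\pi(h(a))\in\pi(Z(A))\subseteq Z(B)$. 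Hence every central polynomial of $A$ is a central polynomial of $B$.

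The substantive direction is the converse, and this is where the main obstacle lies: a priori $\pi^{-1}(Z(B))$ is strictly larger than $Z(A)$, so knowing only that a value lands in $Z(B)$ is not enough. The grading together with regularity closes this gap. Let $f$ be a central polynomial of $B$ and put $w:=f(a_1,\dots,a_m)$ for arbitrary $a_i\in A$, writing $w=w_0+w_1$ with $w_i\in A_i$. Since $\pi(w)=f(\pi(a_1),\dots,\pi(a_m))\in Z(B)$, for every homogeneous $b$ we have $\pi([w,b])=[\pi(w),\pi(b)]=0$, i.e. $[w,b]\in\mathfrak{T}$. For $b\in A_0$ one has $[w,b]=0$ outright, because $A_0\subseteq Z(A)$. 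For $b\in A_1$ the relation computed above gives $[w,b]=[w_1,b]=2w_1b\in A_0$; combining $[w,b]\in\mathfrak{T}$ with $\mathfrak{T}\cap A_0=\mathfrak{T}_0=0$ forces $[w,b]=0$. Thus $[w,b]=0$ for all homogeneous $b$, so $w\in Z(A)$ and $f$ is central for $A$.

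Putting the two inclusions together shows that $A$ and $B$ have the same central polynomials, and because $T(A)=T(B)$ they share the same proper central polynomials. Consequently, for any $f,g\in K\langle X\rangle$ in disjoint variables the hypothesis and the conclusion of the primeness property are literally the same assertion for $A$ and for $B$; assuming $B$ satisfies it, so does $A$. The only points demanding genuine care are the identification $Z(A)=A_0\oplus\mathfrak{T}$ and the passage $\mathfrak{T}\cap A_0=0$ that upgrades ``central modulo $\mathfrak{T}$'' to ``central''; both rest on regularity with $\tau(1,1)=-1$ and on $\operatorname{char}K=0$, so that the factor $2$ is invertible. Notably, this argument parallels the graded reductions of Lemmas~\ref{Lemma.aux.1} and \ref{Lemma.aux.2}, but in the ordinary setting it bypasses any multilinearization: the structural facts about the centre do all the work.
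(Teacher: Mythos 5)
Your proof is correct, and it takes a genuinely different---and in fact stronger---route than the paper's. The paper fixes a pair of multilinear polynomials $f,g$ whose product is a proper central polynomial of $A$, uses the copy of $E$ inside $A$ (Lemma \ref{Aux.Lemma.ordinary.case}) to produce an evaluation of $fg$ lying in $Z(E)$ but outside $\mathfrak{T}$, thereby verifying that $fg$ is a \emph{proper} central polynomial of $B$; it then invokes the primeness of $B$ and finishes with a direct computation showing that every element of $A_{0}+\mathfrak{T}$ is central in $A$ (which is one inclusion of your identity $Z(A)=A_{0}\oplus\mathfrak{T}$). You instead prove the global statement that $A$ and $B$ have exactly the same central polynomials and the same identities, hence the same proper central polynomials, after which the transfer of the primeness property is tautological (and goes in both directions). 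Your key steps---$Z(A)=A_{0}\oplus\mathfrak{T}$ via gradedness of the centre, the relation $[w_{1},b]=2w_{1}b\in A_{0}$ for $b\in A_{1}$, and the upgrade from $[w,b]\in\mathfrak{T}$ to $[w,b]=0$ using $\mathfrak{T}\cap A_{0}=0$---are all sound. What your approach buys: no restriction to multilinear polynomials and no appeal to Lemma \ref{Aux.Lemma.ordinary.case}. What it costs: you need $T(B)=T(E)$, i.e., you must apply the cited Lemma 8 of \cite{L.P.K.3} to $B$ as well as to $A$; this is legitimate, since the preceding lemma of the paper shows $B$ is again regular with bicharacter $\tau$ and minimality of $B$ follows from $\tau(1,1)=-1\neq\tau(0,1)$, but that half-step deserves to be stated explicitly rather than left inside a parenthesis.
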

\begin{proof} Take $f(x_{1},\ldots, x_{r})$ and $g(x_{r+1},\ldots, x_{s})$ two multilinear polynomials in distinct variables such that 
\[
h(x_{1},\ldots,x_{r}; x_{r+1},\ldots, x_{s}):=f(x_{1},\ldots, x_{r})g(x_{r+1},\ldots, x_{s})
\]
is a central proper polynomial in $A$. Since $h\notin T(A)$, then there exists $a_{1}$, \dots, $a_{s}\in A$ such that $h(a_{1},\ldots, a_{s})\neq 0$. If $f\in T(A)$ then it implies that $h(a_{1},\ldots, a_{s})=0$ which is a contradiction. Similarly $g\notin T(A)$. Therefore, we need to show that $f$ and $g$ are central polynomials in $A$. 

Suppose there exists $0\neq a+\mathfrak{T}\in Z(B)\cap B_{1}$ where $B=A/\mathfrak{T}$. Take an element $c_{1}\in A_{1}$, then if $ac_{1}\notin \mathfrak{T}$, we have
\[
 (a+\mathfrak{T})(c_{1}+\mathfrak{T})=(c_{1}+\mathfrak{T})(a+\mathfrak{T})=\tau(1,1)(a+\mathfrak{T})(c_{1}+\mathfrak{T}).
\]
This implies $\tau(1,1)=1$ which is a contradiction. Therefore $ac_{1}\in \mathfrak{T}$, for every $c_{1}\in A_{1}$, thus $ac_{1}=0$. We conclude $Z(B)=A_{0}+\mathfrak{T}$.

By Lemma \ref{Aux.Lemma.ordinary.case}, there exist $a_{1}$, \dots, $a_{s}\in E$ such that $0\neq h(a_{1},\ldots, a_{s})\in Z(E)\subseteq Z(B)$. Therefore there exists $0\neq b\in E_{1}$ such that $h(a_{1},\ldots, a_{s})b\neq 0$, and thus $h(a_{1},\ldots, a_{s})\notin \mathfrak{T}$. Hence $h$ is not an identity of $B$, and $h$ is a central polynomial of $B$. By hypothesis, $f$ and $g$ are central polynomials in $B$. Take arbitrary elements $a_{1}$, \dots, $a_{r}\in A$, then 
\[
f(a_{1}+\mathfrak{T},\ldots, a_{r}+\mathfrak{T})\in Z(B).
\]
 Thus $f(a_{1},\ldots, a_{r})$ is of the form $f(a_{1},\ldots, a_{r})=s_{0}+t$, $s_{0}\in A_{0}$ and $t\in \mathfrak{T}$. If $t\neq 0$, then $f(a_{1},\ldots, a_{r})\in Z(A)$. Suppose $t\neq 0$, and let $c=c_{0}+c_{1}$ be an element in $A$. Then
\begin{align*}
    f(a_{1},\ldots, a_{r})c &=(s_{0}+t)(c_{0}+c_{1})=s_{0}c_{0}+tc_{0}+s_{0}c_{1}\\
    &= c_{0}s_{0}+c_{0}t+c_{1}s_{0}=cf(a_{1},\ldots, a_{r})
\end{align*}
that is, $f(a_{1},\ldots, a_{r})\in Z(A)$.  Analogously one proves $g(a_{1},\ldots, a_{r})\in Z(A)$.
\end{proof}

In the next lemma we proceed in the same way as \cite[Proposition 3.5]{diniz2015primeness}.

\begin{Lemma} 
\label{Primeness.quotiente}
Let $\Gamma\colon A=A_{0}\oplus A_{1}$ be a $\mathbb{Z}_{2}$-graded regular algebra whose regular decomposition is minimal. Then, $B:=A/\mathfrak{T}$ satisfies the primeness property for central polynomials.
\end{Lemma}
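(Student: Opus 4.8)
The plan is to mimic the proof that $E$ enjoys the primeness property, \cite[Proposition 3.5]{diniz2015primeness}, using the three structural facts now available for $B:=A/\mathfrak{T}$: the embedding $E\hookrightarrow B$, the equality $T(B)=T(E)$, and the fact (read off from the proof of Lemma \ref{Aux.equivalence}) that $Z(B)=B_0$, so that an element of $B$ is central precisely when it is homogeneous of degree $0$. First I would reduce to the case where $f(x_1,\dots,x_r)$ and $g(x_{r+1},\dots,x_s)$ are multilinear, by the standard characteristic-zero multilinearization used in \cite[Proposition 3.5]{diniz2015primeness} (note that $B$ is PI-equivalent to the verbally prime algebra $E$). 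Since $fg$ is a proper central polynomial it is not an identity, so neither $f$ nor $g$ lies in $T(B)$; it therefore remains to show that $f$ and $g$ take only central values. I would also record the observation that, because a polynomial is central for $B$ if and only if its commutator with a fresh variable lies in $T(B)=T(E)$, the central polynomials of $B$ and of $E$ coincide; in particular it suffices to exhibit the failure of centrality on evaluations living inside $E$.

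Next, assume for contradiction that $f$ is not central for $B$, and hence not central for $E$. By multilinearity the odd part of $f$ is a nonzero multilinear map, so there is a substitution of the variables of $f$ by monomials in finitely many generators $e_1,\dots,e_N$ of $E$ giving $f(\bar e)=u_0+u_1$ with $u_0\in E_0$ and a nonzero odd part $u_1\in E_1$ supported on $e_1,\dots,e_N$. On the $g$-side, since $g\notin T(B)=T(E)$, I apply Lemma \ref{Aux.Lemma.ordinary.case} to obtain an $E$-evaluation of $g$ with nonzero value in $Z(E)=E_0$; composing with an index-shift endomorphism of $E$, I may assume this evaluation uses only generators $e_m$ with $m>N$, so that the resulting even value $v_0\in E_0$, $v_0\neq 0$, is supported on generators disjoint from those appearing in $u_1$.

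Finally, because $f$ and $g$ involve disjoint sets of variables I may perform the two substitutions simultaneously, so that $fg$ evaluates to $u\,v_0=u_0v_0+u_1v_0$, whose degree-$1$ component is $u_1v_0$. Since $u_1\neq 0$ is supported on $e_1,\dots,e_N$ and $v_0\neq 0$ on disjoint generators, the product of these two nonzero elements of the Grassmann algebra on disjoint generator sets is again nonzero, so $0\neq u_1v_0\in E_1\subseteq B_1$. Thus the value of $fg$ has a nonzero homogeneous component of degree $1$ and so does not lie in $Z(B)=B_0$, contradicting the centrality of $fg$ for $B$. Hence $f$ is central, and interchanging the roles of $f$ and $g$ shows $g$ is central as well; both are proper since neither is in $T(B)$. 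The delicate point, exactly as in the Grassmann case, is the disjoint-support (\emph{fresh generators}) bookkeeping of the second paragraph: it is what guarantees $u_1v_0\neq 0$ and thereby lets the product detect the noncentrality of $f$, and it is legitimate only because the coincidence of central polynomials for $B$ and $E$ allows the offending evaluation of $f$ to be taken inside $E$ on finitely many generators.
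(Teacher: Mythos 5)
Your proof is correct and follows essentially the same strategy as the paper's: evaluate one factor to a nonzero odd element and the other to a nonzero central element of $E$ supported on disjoint Grassmann generators, so that the product is a nonzero odd --- hence noncentral --- value of $fg$, contradicting $Z(B)=B_{0}$. The only (harmless, arguably cleaner) deviation is that you pull the noncentral evaluation entirely into $E$ via $T(B)=T(E)$ and the resulting coincidence of central polynomials, whereas the paper assumes $g$ takes a nonzero odd value on homogeneous elements of $B$, uses regularity to collapse that value to $\gamma\, a_{r+1}\cdots a_{s}+\mathfrak{T}$, and only then brings in fresh generators of $E$ for the central evaluation of $f$.
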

\begin{proof} Let $f(x_{1},\ldots, x_{r})$ and $g(x_{r+1},\ldots, x_{s})$ be multilinear polynomials in distinct variables such that 
\[
h(x_{1},\ldots,x_{r}; x_{r+1},\ldots, x_{s}):=f(x_{1},\ldots, x_{r})g(x_{r+1},\ldots, x_{s})
\]
is a central proper polynomial for $B$. If $f\in T(B)$, then $f(a_{1},\ldots, a_{r})\in \mathfrak{T}$, for all $a_{1}$, \dots, $a_{r}\in A$, but this contradicts Lemma \ref{Aux.Lemma.ordinary.case}. Similarly $g\notin T(B)$. Moreover, $f$ and $g$ must be of degree $\ge 2$, because if for instance $\deg f = 1$, then for each $b \in Z(B)$, there exists a homogeneous element $d \in A$ such that $f(d)b$ has degree $1$. Now, write
\[
g(x_{r+1},\ldots, x_{s})=\sum_{\sigma\in S\{r+1,\ldots, s\})} \gamma_{\sigma} x_{\sigma(1)}\cdots x_{\sigma(r)},\quad \gamma_{\sigma}\in K, \text{ for all } \sigma\in S_{r}
\]
where $S\{r+1,\ldots, s\}$ is the group of all permutations of the elements $r+1$, \dots, $s$.

Suppose there exist homogeneous elements $a_{r+1}+\mathfrak{T}$,\dots, $a_{s}+\mathfrak{T}\in B$ such that $0\neq g(a_{r+1}+\mathfrak{T},\ldots, a_{s}+\mathfrak{T})\in B_{1}$. By regularity, there exists $0\neq \gamma\in K$ such that
\[
g(a_{r+1}+\mathfrak{T},\ldots, a_{s}+\mathfrak{T})=\gamma a_{r+1}\cdots a_{s}+\mathfrak{T}.
\]
Since $(a_{r+1}\cdots a_{s})+\mathfrak{T}\neq 0$, there exists a homogeneous element $0\neq t\in A_{1}$ such that $a_{r+1}\cdots a_{s}t\neq 0$.  Thus there exists $k>s$ such that the elements $a_{r+1}$,\dots, $a_{s}$ are in a finite-dimensional Grassmann algebra $F_{k}$. Without loss of generality we assume that $F_{k}$ is contained in $E$ and $F_{k}$ is generated by 1 and $e_{1}$, \dots, $e_{k}$. Consider the subalgebra $L$ generated by $1$ and $e_{j}$, with $j>k$. By construction, the product of any nonzero element in $L$ with $g(a_{r+1},\ldots, a_{s})$ is nonzero.

Since $L\cong E$, by Lemma \ref{Aux.Lemma.ordinary.case} there exist $l_{1}$, \dots, $l_{r}\in L$ with $0\neq f(l_{1},\ldots, l_{r})\in Z(L)$, that is, there exists $\kappa\in K\setminus\{0\}$ with $f(l_{1},\ldots, l_{r})= \kappa l_{1}\cdots l_{r}\in Z(L)\setminus\{0\}$.

By definition of $L$, there exists  $w\in L_{1}\subseteq A_{1}$ such that $l_{1}\cdots l_{r}a_{r+1}\cdots a_{s}w\neq 0$. Therefore, we get
\[
0\neq f(l_{1}+\mathfrak{T},\ldots, l_{r}+\mathfrak{T})g(a_{r+1}+\mathfrak{T},\ldots, a_{s}+\mathfrak{T})= \kappa\gamma l_{1}\cdots l_{r}a_{r+1}\cdots a_{s}+\mathfrak{T}\in Z(B)\cap B_{1}
\]
which is a contradiction. Hence $g$ is a central polynomial. In a similar way we see that $f$ is a central polynomial, too. 
\end{proof}
Combining Lemmas \ref{Aux.equivalence} and \ref{Primeness.quotiente}, we obtain the following theorem.
\begin{Theorem}
\label{main.theorem}
Let $\Gamma\colon A=A_{0}\oplus A_{1}$ be a $\mathbb{Z}_{2}$-graded regular algebra whose regular decomposition is minimal. Then, $A$ satisfies the primeness property for central polynomials. 
\end{Theorem}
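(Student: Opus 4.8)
The plan is to reduce the whole question to the quotient $B := A/\mathfrak{T}$ by the ideal of $(1,1)$-annihilators, to establish the primeness property there, and then to lift the conclusion back to $A$; this is precisely the architecture encoded in Lemmas \ref{Aux.equivalence} and \ref{Primeness.quotiente}. Two structural facts make the reduction work and should be isolated first. Since the regular decomposition is minimal and $\tau(1,1)=-1$, by \cite[Lemma 8]{L.P.K.3} one has $T(A)=T(E)$ and a copy of $E$ embeds in $A$; consequently (Lemma \ref{Aux.Lemma.ordinary.case}) any multilinear polynomial that is not an identity for $A$ already takes a nonzero \emph{central} value on suitable elements of the copy of $E$ inside $A$. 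Because $\operatorname{char}K=0$ it suffices to treat multilinear $f$ and $g$, so from the outset I would take $f(x_1,\dots,x_r)$ and $g(x_{r+1},\dots,x_s)$ multilinear with $fg$ a proper central polynomial.

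The heart of the matter is the primeness property for $B$ (Lemma \ref{Primeness.quotiente}), and here the decisive observation is that $Z(B)=A_0+\mathfrak{T}$, so that $Z(B)\cap B_1=0$: an odd central element would force $\tau(1,1)=1$, contradicting minimality. To prove primeness I would mimic Diniz's argument for $E$ \cite[Proposition 3.5]{diniz2015primeness}. Given that $fg$ is proper central for $B$, neither $f$ nor $g$ is an identity (again by Lemma \ref{Aux.Lemma.ordinary.case}), and both must have degree at least $2$. Suppose $g$ fails to be central; then some admissible substitution makes $g$ take an odd value, which by regularity is a nonzero scalar multiple of a product $a_{r+1}\cdots a_s$ lying in a finite Grassmann subalgebra $F_k\subseteq E$. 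Choosing the variables of $f$ in the complementary Grassmann subalgebra $L$ generated by the $e_j$ with $j>k$, and using $L\cong E$ together with Lemma \ref{Aux.Lemma.ordinary.case}, I would produce a nonzero central value of $f$ that multiplies the odd value of $g$ into a nonzero element of $Z(B)\cap B_1$ — impossible. This contradiction forces $g$, and symmetrically $f$, to be central.

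Finally I would lift the conclusion from $B$ to $A$ (Lemma \ref{Aux.equivalence}). Using $Z(B)=A_0+\mathfrak{T}$ one checks that a proper central $fg$ in $A$ stays proper central in $B$, so by the previous step $f$ and $g$ are central in $B$; then for arbitrary $a_i\in A$ one has $f(a_1,\dots,a_r)=s_0+t$ with $s_0\in A_0$ and $t\in\mathfrak{T}$, and a direct commutation check — using $A_0\subseteq Z(A)$, $\mathfrak{T}A_1=0$, and $\mathfrak{T}^2=0$ — shows this value is genuinely central in $A$. The main obstacle, and the step deserving the most care, is the core argument for $B$: one must verify that regularity really allows factoring the odd value of $g$ through a finite Grassmann subalgebra while evaluating $f$ independently on a disjoint Grassmann subalgebra, so that the resulting central and odd values can be multiplied without collapsing to zero. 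Once this separation of the variables into independent copies of $E$ is secured, the contradiction with $Z(B)\cap B_1=0$ closes the proof, and the theorem follows by combining the two lemmas.
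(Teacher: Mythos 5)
Your proposal follows essentially the same route as the paper: reduce to the quotient $B=A/\mathfrak{T}$ where $Z(B)\cap B_{1}=0$, run the Diniz-style argument with the embedded copy of $E$ and two disjoint Grassmann subalgebras to get primeness for $B$, and then lift back to $A$ via $Z(B)=A_{0}+\mathfrak{T}$ and the commutation check with $\mathfrak{T}A_{1}=0$. This is precisely the content of Lemmas \ref{Aux.equivalence} and \ref{Primeness.quotiente}, whose combination is the paper's entire proof of the theorem, so the argument is correct as proposed.
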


\begin{Corollary} Let $\Gamma\colon A=A_{0}\oplus A_{1}$ be a $\mathbb{Z}_{2}$-graded regular algebra. Then, $A$ satisfies the primeness property for central polynomials. 
\end{Corollary}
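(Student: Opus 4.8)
The plan is to split the argument according to whether the regular decomposition of $A$ is minimal. In the minimal case there is nothing to do: this is precisely the content of Theorem \ref{main.theorem}. So the entire task reduces to the non-minimal case, and I would first observe that for a $\mathbb{Z}_2$-grading this case is degenerate, forcing $A$ to be commutative.

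To make this precise, I would recall that $\beta$ is a bicharacter, so $\beta(0,x)=\beta(x,0)=1$ for every $x\in\mathbb{Z}_2$, while $\beta(1,1)=\beta(1,1)^{-1}$ gives $\beta(1,1)\in\{1,-1\}$. Hence
\[
M^A=\begin{pmatrix} 1 & 1 \\ 1 & \beta(1,1) \end{pmatrix}, \qquad \det M^A=\beta(1,1)-1,
\]
and by the determinant criterion recalled in Section 2, minimality is equivalent to $\beta(1,1)=-1$. Thus the only non-minimal possibility is $\beta(1,1)=1$; then every value of $\beta$ equals $1$, so $a_g a_h=a_h a_g$ for all homogeneous $a_g$, $a_h$, and writing arbitrary elements as sums of their homogeneous components shows that $A$ is commutative.

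It then remains to note that a commutative algebra trivially has the primeness property: since $Z(A)=A$, every polynomial of $K\langle X\rangle$ is a central polynomial, so if $fg$ is central then both $f$ and $g$ are automatically central. (Properness, when one insists on it, is free in any regular algebra, since $f\in T(A)$ would force $fg\in T(A)$, and similarly for $g$.) Combining the two cases yields the result.

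I do not expect a genuine obstacle here; the substantive work lies in Theorem \ref{main.theorem}, and the only thing left to verify is that removing minimality introduces exactly the commutative $\mathbb{Z}_2$-regular algebras, for which the primeness property is vacuous. The one point that deserves care is the bicharacter computation pinning down $\beta(1,1)$, since it is precisely what collapses the non-minimal case to commutativity rather than to a genuinely new situation.
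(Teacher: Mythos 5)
Your proof is correct and follows essentially the same route as the paper: split on minimality, invoke Theorem \ref{main.theorem} in the minimal case, and observe that non-minimality of a $\mathbb{Z}_2$-regular decomposition forces $\beta(1,1)=1$ and hence commutativity, for which the primeness property is vacuous. The paper simply asserts that the non-minimal case yields a commutative algebra, whereas you supply the short bicharacter computation justifying it; the content is identical.
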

\begin{proof}  If $A$ is a $\mathbb{Z}_{2}$-graded regular algebra whose regular decomposition is non-minimal, then $A$ is a commutative algebra, and therefore, $A$ trivially satisfies the primeness property for central polynomials. If the regular decomposition of $A$ is minimal, then the result follows from Theorem \ref{main.theorem}.
\end{proof}

An interesting phenomenon occurs with a regular $\mathbb{Z}_{2}$-graded algebra with minimal regular decomposition. More precisely, none of its finitely generated non-commutative graded subalgebras satisfies the primeness property for central polynomials as we show in the following proposition. 

\begin{Proposition} Let $A$ be a $\mathbb{Z}_{2}$-graded regular algebra with a minimal regular decomposition, and let $B$ be a finitely generated graded subalgebra of $A$. If $B$ satisfies the primeness property, then $B$ is commutative.  
\end{Proposition}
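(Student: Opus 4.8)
The plan is to prove the contrapositive: if the finitely generated graded subalgebra $B=B_0\oplus B_1$ is \emph{non-commutative}, then it fails the primeness property. First I would record the structure forced by the bicharacter. Since the regular decomposition of $A$ is minimal we have $\tau(1,1)=-1$ and $\tau(0,x)=1$, so every element of $B_0$ is central in $B$, while any $a,b\in B_1$ satisfy $ab=-ba$ and $a^{2}=0$. Consequently $Z(B)=B_0\oplus\{a\in B_1: aB_1=0\}$, and $B$ is non-commutative precisely when $B_1^{2}\neq 0$, i.e. when there exist $a,b\in B_1$ with $ab\neq 0$.

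The key finiteness step, and the only place where finite generation is used, is to bound the lengths of products of odd elements. Fix homogeneous generators of $B$ and let $c_1,\dots,c_r$ be those lying in $B_1$. Writing any element of $B_1$ as a combination of monomials each involving at least one $c_j$, and using that the $c_j$ anticommute and square to zero, a product of more than $r$ elements of $B_1$ must repeat some $c_j$ and hence vanishes. Thus there is a maximal integer $m$ (with $2\le m\le r$, the lower bound coming from non-commutativity) for which some product $w=a_1\cdots a_m$ of elements of $B_1$ is nonzero. Such a \emph{top product} $w$ is central: it commutes with $B_0$, and for any $d\in B_1$ both $wd$ and $dw$ are products of $m+1$ odd elements, hence $0$.

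With $m$ in hand I would build the counterexample to primeness. Put $\ell=m$ if $m$ is even and $\ell=m-1$ if $m$ is odd, so that $\ell$ is even and $2\le \ell\le m$, and set
\[
g=[x_1,x_2][x_3,x_4]\cdots[x_{\ell-1},x_\ell],\qquad f=x_{\ell+1}.
\]
For an arbitrary substitution $x_i\mapsto u_i\in B$ one has $[u_{2i-1},u_{2i}]=2\,(u_{2i-1})_1(u_{2i})_1\in B_0$, so $g$ evaluates to a scalar multiple $c$ of a single ordered product of $\ell$ elements of $B_1$; in particular $c\in B_0\subseteq Z(B)$. Then $fg$ evaluates to $(u_{\ell+1})_0\,c+(u_{\ell+1})_1\,c$, whose first summand lies in $B_0\subseteq Z(B)$ while the second is a product of $\ell+1$ odd elements. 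If $\ell=m$ this has length $m+1$ and vanishes; if $\ell=m-1$ it is a top product and is central by the previous step. Hence $fg$ takes only central values, i.e. $fg$ is a central polynomial, whereas $f=x_{\ell+1}$ is not central, since it takes the value $a$ with $aB_1\neq 0$. This already shows that primeness fails, \emph{once we know $fg$ is proper}.

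The step I expect to be the crux is \emph{properness}, i.e. $fg\notin T(B)$. Here I would use that $B$ is unital (we may take $1\in B_0$): choosing $a_1,\dots,a_m\in B_1$ with $a_1\cdots a_m=w\neq 0$ and substituting $x_{2i-1}=a_{2i-1}$, $x_{2i}=a_{2i}$ and $x_{\ell+1}=1$ gives $fg\mapsto 2^{\ell/2}a_1\cdots a_\ell\neq 0$, the factor $a_1\cdots a_\ell$ being a nonzero top (resp. sub-top) product. The subtlety is exactly that one needs a left multiplier of a top product; when $B$ has no unit it can happen that every top product is a two-sided annihilator (for instance the unit-free subalgebra of $E$ generated by $e_1,\dots,e_m$), in which case the simple capping above degenerates to an identity and one must instead invoke a Formanek-type polynomial reaching the sub-top length $m-1$, exactly as in Proposition~\ref{main.prop.matrix.algebra}. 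Finally I would stress that finite generation is genuinely necessary: for the infinitely generated algebra $E$ no finite top product exists, and indeed $E$ itself satisfies the primeness property, so the conclusion cannot hold without the hypothesis that $B$ be finitely generated.
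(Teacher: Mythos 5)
Your argument is correct and follows essentially the same route as the paper: both proofs identify the maximal length $m$ of a nonzero product of elements of $B_1$, take the even-length product of commutators $[x_1,x_2]\cdots[x_{\ell-1},x_\ell]$ with $\ell\le m$ as the central factor, and multiply by one extra variable (evaluated at $1$, with $B=K\oplus B(n)$ assumed unital exactly as you need) to produce a proper central product whose first factor is not central. The only difference is cosmetic: the paper first reduces, via \cite[Theorem 27]{L.P.K.3}, to the case where $B$ is generated by elements of $A_1$, whereas you handle mixed homogeneous generators directly by isolating the odd ones.
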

\begin{proof} According to \cite[Theorem 27]{L.P.K.3} it is enough to prove that if $B=\langle a_{1},\ldots, a_{n}\rangle$, where $a_{1}$, \dots, $a_{n}\in A_{1}$, then $B$ does not satisfy the primeness property for central polynomials. Since $a_{1}\cdots a_{n}a_{i}=0$, for all $1\leq i\leq n$, we can write $B=K\oplus B(n)$, where $B(n)$ is the following subalgebra of $B$
\[
B(n):=\operatorname{Span}_{K}\{a_{j_{1}}\cdots a_{j_{k}}\mid j_{1},\ldots, j_{k}\in \{1,\ldots, n\},\quad k\leq n\}.
\] An easy computation shows $B_1=B(n)_1$.
 Consider now
\[
 l:=\max\Big\{k\mid a_{j_{1}}\cdots a_{j_{k}}\neq 0,\quad j_{1},\ldots, j_{k}\in \{1,\ldots, n\}\Big\}
\]
then, $l\leq n$ and $x_{1}\cdots x_{l+1}\in T(B(n))$. In particular, if $a_{j_{1}}\cdots a_{j_{l}}\neq 0$, then $a_{j_{1}}\cdots a_{j_{l}}\in Z(B)$. Without loss of generality, suppose $a_{1}\cdots a_{l}\neq 0$. 
Take $k\in \mathbb{N}$ as the largest integer such that $2k\leq l$, and define
\[
t(x_{1},\ldots, x_{2k})=[x_{1},x_{2}]\cdots [x_{2k-1},x_{2k}].
\]
Notice that for all $a$, $b\in B$, with $a=\delta+a'$ and $b=\gamma+b'$, where $\delta$, $\gamma\in K$ and $a'$$ $$,b'\in B(n)$, we have 
$[a,b]=[a',b']$. Thus, we get
\[
t(b_{1},\ldots, b_{2k})\in B(n),\quad \text{for all}\quad b_{1}.\ldots,b_{2k}\in B,
\]
If some $b_{i}\in B_{0}$, we automatically have $t(b_{1},\ldots, b_{2k})=0$. Moreover, since $Z(B)$ contains the subalgebra generated by $1$ and monomials $a_{q_{1}}\cdots a_{q_{u}}$, with $u\leq l$ and $q_{u}$ even, we conclude that
\[
t(b_{1},\ldots, b_{2k})\in Z(B).
\]
Now, consider the polynomial $f(x,x_{1},\ldots, x_{2k}):=x t(x_{1},\ldots,x_{2k})$, where $x$ is a variable different from $x_{i}$, $1\leq i\leq 2k$. If for some homogeneous elements $a$, $b_1$, \dots, $b_k$ we have $f(a,b_{1},\ldots,b_{2k})\neq 0$, then $b_{1}$, \dots, $b_{2k}\in B_{1}=(B(n))_{1}$ and
\[
f(a,b_{1},\ldots, b_{2k}) = a[b_{1},b_{2}]\cdots [b_{2k-1},b_{2k}] = 2^{k} a b_{1}\cdots b_{2k}.
\]
Since $x_{1}\cdots x_{l+1}\in T(B(n))$, it follows that either $a\in K$, or $a=a_{i}$ for some $1\leq i\leq n$. In both cases, $f(a,b_{1},\ldots,b_{2k})\in Z(B)$.

Moreover, $f\notin T(B)$ because if $l$ is odd, then $l=2k+1$ and 
\[
f(1,a_{1},\ldots, a_{l-1}) = 2^{k} a_{1}\cdots a_{l-1} \neq 0.
\]
If $l$ is even, then $l=2k$ and 
\[
f(1,a_{1},\ldots, a_{l}) = 2^{k} a_{1}\cdots a_{l} \neq 0.
\]
Consequently, $f$ is a proper central polynomial for $B$, whereas $x$ is not. We conclude that $B$ does not satisfy the primeness property for central polynomials.
\end{proof}

\bibliographystyle{abbrv}
\bibliography{Ob.reg.bib}

\end{document}